\documentclass[final,3p,times]{elsarticle}
\usepackage{amsfonts}
\usepackage{dsfont}
\usepackage{latexsym,amssymb}
\usepackage{amsmath, amsbsy}
\usepackage{amsopn, amstext}
\usepackage{graphicx, xcolor, epstopdf}
\usepackage{caption}
\usepackage{subcaption}
\usepackage{hyperref}
\usepackage{float}
\usepackage{enumitem}
\usepackage{graphicx}
\usepackage{diagbox}
\usepackage{amsmath,bm}
\usepackage{booktabs}
\usepackage{multirow}
\allowdisplaybreaks[4]

\usepackage[marginal]{footmisc}
\newtheorem{lemma}{Lemma}
\newtheorem{theorem}{Theorem}
\newtheorem{remark}{Remark}

\newtheorem{proposition}{Proposition}
\newtheorem{assumption}{Assumption}[section]

    \numberwithin{Fig}{section}

\makeatletter
\renewcommand{\section}{\@startsection{section}{1}{0mm}
  {-0.5\baselineskip}{0.3\baselineskip}{\bf\leftline}}

\let\oldthebibliography\thebibliography
\renewcommand\thebibliography[1]{
  \oldthebibliography{#1}
  \setlength{\itemsep}{0em}
  \setlength{\parskip}{0pt}
}
\numberwithin{equation}{section}
\numberwithin{Lem}{section}
\numberwithin{Rem}{section}
\numberwithin{theorem}{section}

\begin{document}
\begin{frontmatter}

\title{A high order stabilization-free virtual element method for general second-order elliptic eigenvalue problem}

\author[1]{Liangkun Xu}\ead{xlkun@gznu.edu.cn}

\author[1]{Shixi Wang}\ead{wangshixi@gznu.edu.cn}

\author[1]{Yidu Yang}\ead{ydyang@gznu.edu.cn}

\author[1]{Hai Bi\corref{cor1}}\ead{bihaimath@gznu.edu.cn}

\cortext[cor1]{Corresponding Author}

\begin{abstract}
In this paper, we discuss a novel higher-order stabilization-free virtual element method for general second-order elliptic eigenvalue problems. Optimal a priori error estimates are derived for both the approximate eigenspace and eigenvalues. Numerical experiments are conducted on regular convex polygonal meshes, convex-concave polygonal meshes, and concave polygonal meshes. The numerical results validate the effectiveness of the proposed method.
\end{abstract}

\begin{keyword}
~Eigenvalues; Virtual Element Method; High-order Stabilization-free; A priori error estimate.
\MSC  65N30\sep 65N25

\end{keyword}
\end{frontmatter}

\section{Introduction}
\indent The virtual element method was originally proposed in a conforming formulation by \cite{BeiraoBC2013,BeiraoBM2014}
for Poisson equation. Research indicates that the virtual element method not only retains the advantages of finite element formulations, but is also well-suited for handling complex element geometries. It proves particularly effective in addressing higher-order continuity requirements and allows for the straightforward construction of $C^1$-approximations. Since then, various variants have been developed, including the mixed virtual element method \cite{BrezziFM2014}, nonconforming virtual element method \cite{DeDiosLM2016}, serendipity virtual element method \cite{BeiraoBreMar2016}, h-version and p-version virtual element methods \cite{MascottoPP2018}, extended virtual element method \cite{BenvenutiCM2019}, immersed virtual element method \cite{CaoCG2022}, interior penalty virtual element method \cite{ZhaoZ2023}, and two-grid virtual element method \cite{ChenWZ2023}. The standard virtual element discretization consists of two parts: a consistent term (polynomial) and a stabilization term (non-polynomial). In recent years, the role of the stabilization term in virtual element methods has garnered continuous attention \cite{BoffiGG2020,BerroneBM2022,BeiraoCNRH2023,Mascotto2023,AlzabenBD2025,BerroneBOFA2026}. Notably, in \cite{BerroneBOMA2023} it is pointed out that the absence of the stabilization term can reduce errors and facilitate convergence in strongly anisotropic problems. Furthermore, \cite{BerroneBF2025} mentioned that in classical a posteriori error analysis, the stabilization term appearing on the right-hand side poses certain challenges to adaptive theory. \\

\indent Research on low-order stabilization-free schemes (i.e., methods that do not use stabilization terms) or self-stabilizing virtual element methods began with the early publicly available work in the scientific community by \cite{BerroneBM2025} and \cite{DAltriAMdp2021}.
 Since then, stabilization-free virtual element methods have become a research focus, as evidenced by studies such as \cite{ChenSukumar2023, BertrandCG2023, XuPeng2023, LampertiCP2023, BouchezGB2024, BorioLM2024, BerroneBM2024}. It is important to note that research on stabilization-free virtual element methods for eigenvalue problems remains relatively limited.  
 Among the relevant studies, for the Laplace eigenvalue problem, \cite{MengWB2022} discussed a stabilization-free low-order virtual element method, while \cite{FolignoBCV2026} provided an in-depth numerical investigation of various stabilized and self-stabilized formulations for the p-version of the virtual element method.
  \cite{MarconMora2025} proposed a stabilization-free low-order virtual element method for non-self-adjoint convection-diffusion eigenvalue problems, demonstrating the robustness of the method through numerical experiments. \cite{AlzabenBD2025} conducted a convergence analysis of virtual element approximations for acoustic vibration eigenvalue problems, showing that optimal convergence can also be achieved without using stabilization terms, with rigorous proofs provided for triangular meshes. \cite{MengGQM2026} presented a stabilization-free low-order virtual element method for the Helmholtz transmission eigenvalue problem in anisotropic media and further gave a high-order and high-dimensional stabilization-free virtual element method in the numerical experiment section without theoretical analysis.  
 Recently, \cite{BerroneBF2025} proposed and analyzed a high-order stabilization-free virtual element method for 2D second-order elliptic equations. \\

\indent Based on the aforementioned work, particularly the work in reference \cite{BerroneBF2025}, in this paper we first establish a high-order stabilization-free virtual element discretization scheme for the general second-order elliptic eigenvalue problem. Subsequently, based on the a priori error estimates for the source problem corresponding to the eigenvalue problem and the regularity assumptions, we prove that the discrete solution operator converges in norm to the exact solution operator. Then, using the spectral approximation theory, we derive the optimal a priori error estimates for both the approximation eigenspace and the approximate eigenvalues.
To validate our theoretical analysis, we conduct numerical experiments on meshes of regular quadrilaterals,
convex-concave pentagons, and concave octagons.
The results indicate that our proposed method is effective. In addition, we compare our method with the standard virtual element method (which includes a stabilization term). The numerical experiments show that the approximate eigenvalues obtained by our method are more accurate than those obtained by the standard virtual element method. \\

\indent In this paper, the letter $C$ (with or without subscripts) denotes a generic positive constant independent of the mesh size $h$, whose specific value may vary depending on the context. The inequality $a \leq Cb$ is abbreviated as $a \lesssim b$.

\section{Eigenvalue problems and their high-order stabilization-free virtual element approximations}
\indent Let $\Omega \in \mathbb{R}^2$ be a bounded open domain with a Lipschitz boundary $\partial\Omega$. Let $H^\xi(D)$ denote the standard complex Sobolev space of order $\xi$ defined on $D$, equipped with the norm $\|\cdot\|_{\xi,D}$ and the seminorm $|\cdot|_{\xi,D}$. In particular, $H^0(D)=L^2(D)$, and the inner product in $L^2(D)$ is given by $(\cdot,\cdot)_{0,D}=\int_D u \overline{v}$. For the sake of simplicity, the subscript $D$ will be omitted when $D=\Omega$. Herein, we consider the following general eigenvalue problem: find $\lambda \in \mathbb{C}$ and a non-trivial function $u$ such that
\begin{eqnarray}
\begin{cases}
\nabla\cdot(-\mathbf{\mathcal{K}} \nabla u + \boldsymbol{\beta} u )+ \gamma u=\lambda u,&\quad \text{in}\ \Omega,\\
\qquad \qquad u=0,&\quad \text{on}\ \partial \Omega, \label{a2.1}
\end{cases}
\end{eqnarray}
\noindent where $\boldsymbol{\beta} \in [L^\infty(\Omega)]^2$ with $\nabla \cdot \boldsymbol{\beta}=0$, and $\gamma \in L^\infty(\Omega)$ and non negative on $\Omega$. We assume that there exist constants $\mathcal{K}_0$ and $\mathcal{K}_1$ such that the symmetric diffusion tensor $\mathbf{\mathcal{K}} \in [L^\infty(\Omega)]^{2\times 2}$ satisfies 
$$\mathcal{K}_0 |\boldsymbol{v}|^2 \leq \overline{\boldsymbol{v}}\cdot(\mathbf{\mathcal{K}}(\boldsymbol{x}) \boldsymbol{v}) \leq \mathcal{K}_1 |\boldsymbol{v}|^2,\qquad \forall \boldsymbol{v} \in \mathbb{C}^2,\ \forall \boldsymbol{x}=(x_1,x_2) \in \Omega, $$ 
where the symbol $|\cdot|$ denotes the Euclidean norm. We further assume that $\mathbf{\mathcal{K}}$, $\boldsymbol{\beta}$ and $\gamma$ are piecewise sufficiently smooth.\\
\indent By Green's formula, the variational formulation of \eqref{a2.1} can be readily derived as follows: find $(\lambda,u)\in \mathbb{C} \times H_0^1(\Omega)$ such that
\begin{eqnarray}
\mathcal{B}(u,v):=a(u,v)+b(u,v)+c(u,v)=\lambda(u,v),\quad \forall ~v \in ~ H_0^1(\Omega), \label{a2.2}
\end{eqnarray}
where
\begin{eqnarray}
a(u,v):=\int_\Omega (\mathbf{\mathcal{K}} \nabla u)\cdot \nabla \overline{v}, ~ b(u,v):=\int_\Omega (\boldsymbol{\beta} \cdot \nabla u) \overline{v},~
c(u,v):=\int_\Omega \gamma u \overline{v},~ \forall u,v \in ~ H_0^1(\Omega). \label{a2.3}
\end{eqnarray}
\noindent For $\forall f \in L^2(\Omega)$, the source problem corresponding to \eqref{a2.2} reads as: find $w \in H_0^1(\Omega)$ such that
\begin{eqnarray}
\mathcal{B}(w,v)=(f,v),\quad \forall ~v \in ~ H_0^1(\Omega). \label{a2.4}
\end{eqnarray}
From \cite{CangianiMS2017,BeiraoBM2016}, we know that the problem \eqref{a2.4} admits a unique solution. 
We define the solution operator associated with problem \eqref{a2.4}:
\begin{eqnarray}
\mathbb{K}:L^2(\Omega) \to H_0^1(\Omega), \nonumber \\
f \mapsto \mathbb{K}f:=w, \label{a2.5}
\end{eqnarray}
and from \cite{BernardiV2000,Grisvard2011}, it follows that for any $f \in L^2(\Omega)$, $\mathbb{K}f \in H^{1+r}(\Omega)$ with $0\leq r \leq 1$, and the estimate holds:
\begin{eqnarray}
\|\mathbb{K}f\|_{1+r} \leq C \|f\|_0. \label{a2.6}
\end{eqnarray}

\indent The problem \eqref{a2.2} is non-self-adjoint, and its adjoint eigenvalue problem is given by: find $(\lambda^*,u^*)\in (\mathbb{C} \times H_0^1(\Omega))$ such that
\begin{eqnarray}
\mathcal{B}(v,u^*)=\overline{\lambda^*}(v,u^*),\quad \forall ~v \in ~ H_0^1(\Omega), \label{a2.7}
\end{eqnarray}
with the original eigenvalue and the adjoint eigenvalue related by $\lambda=\overline{\lambda^*}$. For any $g \in L^2(\Omega)$, the adjoint source problem corresponding to \eqref{a2.4} is given by: find $w^* \in H_0^1(\Omega)$ satisfying
\begin{eqnarray}
\mathcal{B}(v,w^*)=(v,g),\quad \forall ~v \in ~ H_0^1(\Omega). \label{a2.8}
\end{eqnarray}
We similarly define the solution operator associated with \eqref{a2.8}:
\begin{eqnarray}
\mathbb{K}^* :L^2(\Omega) \to H_0^1(\Omega) ,\nonumber \\
g \mapsto \mathbb{K}^*g:=w^*. \label{a2.9}
\end{eqnarray}
\indent Let $\mathcal{T}_h$ be a conforming polygonal partition of $\Omega$. For $\forall E \in \mathcal{T}_h$, let $N_E$ and $h_E$ denote the number of vertices and the diameter of element $E$, respectively. Let $h=\max_{E \in \mathcal{T}_h} h_E$ be the mesh size, $\mathcal{E}_h$ denote the set of edges $e$ of $\mathcal{T}_h$, and $h_e$ be the length of edge $e$. Referring to \cite{BeiraoBC2013}, we make the following assumption on the mesh.
\begin{assumption} \label{ass3.1}
For $\forall E \in \mathcal{T}_h$, there exists a constant $C_\mathcal{T}$ such that
\begin{itemize}
    \itemsep 0mm
    \item Each element $E$ is star-shaped with respect to a ball of radius $\geq C_\mathcal{T} h_E$;
    \item The length $h_e$ of any edge $e$ of $\partial E$ satisfies $h_e \geq C_\mathcal{T} h_E$.
\end{itemize}
\end{assumption}
It is worth noting that the above conditions imply that the number of vertices of each polygon $E$ is uniformly bounded, i.e., there exists a constant $N_{max}$ such that $\forall E \in \mathcal{T}_h$, $N_{E} \leq N_{max}$.\\
\indent Let $\mathbb{P}_k(E)$ be the space of polynomials of degree at most $k$ defined on element $E$, and $\mathbb{P}_k(e)$ be defined analogously to $\mathbb{P}_k(E)$. First, we introduce the elliptic projection $\Pi_k^{\nabla,E}: H^1(E) \to \mathbb{P}_k(E)$, which satisfies
\begin{align}
&(\nabla(\Pi_k^{\nabla,E}v -v), \nabla p)_{0,E}=0,\quad \forall v \in H^1(E), ~ p \in \mathbb{P}_k(E), \label{a3.1} \\
&\frac{1}{|E|}\int_E (\Pi_k^{\nabla,E}v -v)=0, \quad k\geq2,\quad \forall v \in H^1(E). \label{a3.2}
\end{align}
In addition, we introduce the $L^2$ projection $\Pi_k^{0,E}: L^2(E) \to \mathbb{P}_k(E)$, which satisfies
\begin{align}
&(\Pi_k^{0,E}v -v, p)_{0,E}=0,\quad \forall v \in L^2(E), ~ p \in \mathbb{P}_k(E). \label{a3.3}
\end{align}
Then, we define the local virtual element space of order $k~(k\geq2)$ as follows:
\begin{align}
\mathcal{V}^E_{h,k}:=\{ v_h \in H^1(E) : & \Delta v_h |_E \in \mathbb{P}_k(E),~ v_h |_{e} \in \mathbb{P}_k(e), \forall e \in \partial E, ~v_h \in C^0(\partial E),\nonumber \\
& (\Pi_k^{\nabla,E} v_h - v_h, p)_{0,E} = 0, \forall p \in \mathbb{P}_k (E)\setminus \mathbb{P}_{k-2} (E)\}. \label{a3.4}
\end{align}
By gluing the local virtual element spaces given in \eqref{a3.4}, we obtain the global virtual element space:
\begin{eqnarray}
\mathcal{V}_{h,k} := \{ v_h \in H_0^1(\Omega) : v_h |_E \in \mathcal{V}^E_{h,k}, \forall E \in \mathcal{T}_h \}. \label{a3.5}
\end{eqnarray}
Referring to \cite{AhmadAB2013}, we specify the degrees of freedom of $\mathcal{V}^E_{h,k}$ as follows:
\begin{itemize} \itemsep 0mm
    \item The values of $v_h$ at $N_E$ vertices of polygon $E$;
\end{itemize}
\noindent For $k\geq2$,
\begin{itemize} \itemsep 0mm
    \item For $i=1,\cdots,k-1$, $\forall e \in \partial E$, the edge moments $h_e^{-1}(v_h,m_i)_{0,e}$, where $m_i$ are scaled monomials of $\mathbb{P}_{k-2}(e)$;
    \item For $i=1,\cdots,k(k-1)/2$, the interior moments $|E|^{-1}(v_h,m_i)_{0,E}$, where $m_i$ are scaled monomials of $\mathbb{P}_{k-2}(E)$.
\end{itemize}
\indent To introduce the stabilization-free virtual element approximation, we adopt the definition of the high-order polynomial projection operator $\Pi_\mathcal{P}^{0,E} \nabla$ from \cite{BerroneBOFA2026,BerroneBF2025}. For $\forall E \in \mathcal{T}_h$, let $\ell$ be a given natural number, and define
\begin{eqnarray}
\mathcal{P}_{k,\ell}:=[\mathbb{P}_{k-1}(E)]^2 \oplus \textbf{curl} \left(\mathbb{P}_{k+\ell}(E)\setminus \mathbb{P}_{k}(E)\right)
= \boldsymbol{x} \mathbb{P}_{k-2}(E) \oplus \textbf{curl} \left(\mathbb{P}_{k+\ell}(E)\right),\label{a3.6}
\end{eqnarray}
where $\textbf{curl} (p)=\left(\partial p/\partial x_2,-\partial p/\partial x_1\right)$. Next, we define the $L^2$-orthogonal projection $\Pi_\mathcal{P}^{0,E} \nabla: H^1(E) \to \mathcal{P}_{k,\ell}$, which satisfies
\begin{align}
&\left(\Pi_\mathcal{P}^{0,E} \nabla v - \nabla v, \boldsymbol{p}\right)_{0,E}=0,\quad \forall v \in H^1(E), ~ \boldsymbol{p} \in \mathcal{P}_{k,\ell}. \label{a3.7}
\end{align}
\indent Note that the divergence operator $div$ mapping from $ \boldsymbol{x} \mathbb{P}_{k-2}(E)$ to $\mathbb{P}_{k-2}(E)$ is surjective. For $\forall v_h \in \mathcal{V}_{h,k}^E$ and $\boldsymbol{p} \in \mathcal{P}_{k,\ell}$,
it follows from \eqref{a3.6} that $\boldsymbol{p}=\boldsymbol{p}_1 + \boldsymbol{p}_2$ where $\boldsymbol{p}_1 \in \boldsymbol{x} \mathbb{P}_{k-2}(E)$ and $\boldsymbol{p}_2 \in \textbf{curl} \left(\mathbb{P}_{k+\ell}(E)\right)$. Since $div(\boldsymbol{p}_2)=0$,
applying the divergence theorem to \eqref{a3.7}, we obtain
\begin{align*}
\left(\Pi_\mathcal{P}^{0,E} \nabla v_h, \boldsymbol{p}\right)_{0,E} = \left(\nabla v_h, \boldsymbol{p}\right)_{0,E}=-\int_{E} v_h div(\boldsymbol{p}_1) + \int_{\partial E} v_h \left(\boldsymbol{p}\cdot \boldsymbol{n}^{\partial E}\right),
\end{align*}
where $\boldsymbol{n}^{\partial E}$ denotes the unit outward normal vector of $\partial E$. As $div(\boldsymbol{p}_1) \in \mathbb{P}_{k-2}(E)$, it can be easily seen from the choice of degrees of freedom that the projection operator $\Pi_\mathcal{P}^{0,E} \nabla$ is computable solely by using the degrees of freedom of $v_h$.\\
\indent Next, for $\forall E \in \mathcal{T}_h$, we define
\begin{eqnarray}
\mathcal{K}_E^\vee:=\inf_{\boldsymbol{x} \in E} \sup_{\boldsymbol{v}\in \mathbb{C}^2} \frac{\overline{\boldsymbol{v}}\cdot(\mathbf{\mathcal{K}}(\boldsymbol{x}) \boldsymbol{v})}{|\boldsymbol{v}|^2},~
\mathcal{K}_E^\wedge:=\sup_{\boldsymbol{x} \in E} \sup_{\boldsymbol{v}\in \mathbb{C}^2} \frac{\overline{\boldsymbol{v}}\cdot(\mathbf{\mathcal{K}}(\boldsymbol{x}) \boldsymbol{v})}{|\boldsymbol{v}|^2}, \label{a3.8}
\end{eqnarray}
as well as the sesquilinear form $\mathcal{B}_h(\cdot,\cdot)$ as follows:
\begin{eqnarray}
\mathcal{B}_h(u_h,v_h):= \sum_{E \in \mathcal{T}_h}a_h^E(u_h,v_h)+b_h^E(u_h,v_h)+c_h^E(u_h,v_h),\quad \forall ~u_h,v_h \in ~ \mathcal{V}_{h,k}, \label{a3.9}
\end{eqnarray}
where
\begin{align}
&a_h^E(u_h,v_h):=\left(\mathbf{\mathcal{K}} \Pi_\mathcal{P}^{0,E} \nabla u_h,\Pi_\mathcal{P}^{0,E} \nabla v_h\right)_{0,E},&\quad \forall ~u_h,v_h \in~\mathcal{V}_{h,k}^E, \label{a3.10}\\
&b_h^E(u_h,v_h):=\left(\boldsymbol{\beta} \cdot \Pi_{k-1}^{0,E} \nabla u_h,\Pi_k^{0,E} v_h\right)_{0,E},&\quad \forall ~u_h,v_h \in~\mathcal{V}_{h,k}^E, \label{a3.11}\\
&c_h^E(u_h,v_h):=\left(\gamma \Pi_k^{0,E} u_h,\Pi_k^{0,E} v_h\right)_{0,E},&\quad \forall ~u_h,v_h \in~\mathcal{V}_{h,k}^E. \label{a3.12}
\end{align}
Then, the stabilization-free virtual element discretization of \eqref{a2.2} is to find $(\lambda_h,u_h)\in \mathbb{C} \times \mathcal{V}_{h,k}$ such that
\begin{eqnarray}
\mathcal{B}_h(u_h,v_h)= \lambda_h \sum_{E \in \mathcal{T}_h}  \left(\Pi_k^{0,E} u_h,\Pi_k^{0,E} v_h\right)_{0,E},\quad \forall v_h \in~\mathcal{V}_{h,k}. \label{a3.13}
\end{eqnarray}
And the stabilization-free virtual element discretization of \eqref{a2.3} is to find $w_h \in \mathcal{V}_{h,k}$ such that
\begin{eqnarray}
\mathcal{B}_h(w_h,v_h)= \sum_{E \in \mathcal{T}_h}  \left(f,\Pi_k^{0,E} v_h\right)_{0,E},\quad \forall v_h \in~\mathcal{V}_{h,k}. \label{a3.14}
\end{eqnarray}
\indent For $\forall E \in \mathcal{T}_h$ and $\forall v_h \in~\mathcal{V}_{h,k}^E$, let $a(\cdot,\cdot)$, $b(\cdot,\cdot)$ and $c(\cdot,\cdot)$ restricted to an element $E$ be denoted by $a^E(\cdot,\cdot)$, $b^E(\cdot,\cdot)$ and $c^E(\cdot,\cdot)$ respectively. Then we define
\begin{eqnarray*}
 |\| v_h \||_{E}^2:=a^E(v_h,v_h)+c_h^E(v_h,v_h):=\left\| \sqrt{\mathbf{\mathcal{K}}} \nabla v_h\right\|_{0,E}^2 + \left\|\sqrt{\gamma} \Pi_k^{0,E} v_h\right\|_{0,E}^2,
\end{eqnarray*}
and for $\forall v_h \in~\mathcal{V}_{h,k}$, define
\begin{eqnarray*}
 |\| v_h \||^2:= \sum_{E \in \mathcal{T}_h} |\| v_h \||_{E}^2.
\end{eqnarray*}
\begin{assumption} \label{ass3.2}
Let $\mathbb{P}_{k-1}^0(\partial E):=\{\pi: \pi|_e \in  \mathbb{P}_{k-1}(e),\ \forall e \in \partial E,\ \int_{\partial E} \pi =0 \}$. Assume that $\ell$ is the minimal integer such that any polynomial $q \in \mathbb{P}_{k+\ell}(E)$ can be uniquely determined by a set of degrees of freedom, which include $k N_E-1$ distinct moments $(1/|\partial E|)(q,\pi_i)_{0,\partial E}$ where $\pi_i$ are scaled polynomial bases of the space $\mathbb{P}_{k-1}^0(\partial E)$.
\end{assumption}
\indent According to Theorems 2 and 3 in \cite{BerroneBF2025}, we know that the sesquilinear form $\mathcal{B}_h(\cdot,\cdot)$ is bounded and coercive, that is, when the mesh size $h$ is sufficiently small, under Assumptions \ref{ass3.1} and \ref{ass3.2}, then there hold
\begin{align}
&\mathcal{B}_h(u_h,v_h) \lesssim |\| u_h \|| |\| v_h \||, &\forall u_h,v_h \in~\mathcal{V}_{h,k}, \label{a3.15}\\
&\mathcal{B}_h(v_h,v_h) \lesssim  |\| v_h \||^2,  &\forall v_h \in~\mathcal{V}_{h,k} \label{a3.16}.
\end{align}
From \eqref{a3.15} and \eqref{a3.16}, by the Lax-Milgram Theorem, we conclude that the discrete source problem \eqref{a3.14} admits a unique solution. 
For $\forall f \in L^2(\Omega)$, we thus define the discrete solution operator:
\begin{eqnarray}
\mathbb{K}_h:L^2(\Omega) \to \mathcal{V}_{h,k} \subset H_0^1(\Omega),\nonumber \\
f \mapsto \mathbb{K}_h f:=w_h. \label{a3.17}
\end{eqnarray}
\indent Similarly, the adjoint eigenvalue problem of the discrete eigenvalue problem \eqref{a3.13} is to find $(\lambda_h^*,u_h^*)\in \mathbb{C} \times \mathcal{V}_{h,k}$ such that
\begin{eqnarray}
\mathcal{B}_h(v_h,u_h^*)=\overline{\lambda_h^*} \sum_{E \in \mathcal{T}_h}  (\Pi_k^{0,E} v_h,\Pi_k^{0,E} u_h^*)_{0,E},\quad \forall ~v_h \in ~ \mathcal{V}_{h,k}, \label{a3.18}
\end{eqnarray}
with the discrete eigenvalue and discrete adjoint eigenvalue related by $\lambda_h=\overline{\lambda_h^*}$. For any $g \in L^2(\Omega)$, the stabilization-free virtual element discretization of \eqref{a2.8} is to find $w_h^* \in \mathcal{V}_{h,k}$ such that
\begin{eqnarray}
\mathcal{B}_h(v_h,w_h^*)=\sum_{E \in \mathcal{T}_h}  (\Pi_k^{0,E} v_h,g)_{0,E},\quad \forall ~v_h \in ~ \mathcal{V}_{h,k}, \label{a3.19}
\end{eqnarray}
whence the discrete solution operator is defined as
\begin{eqnarray}
\mathbb{K}_h^* :L^2(\Omega) \to \mathcal{V}_{h,k} \subset H_0^1(\Omega), \nonumber \\
g \mapsto \mathbb{K}_h^*g:=w_h^*. \label{a3.20}
\end{eqnarray}

\section{A priori error analysis}
\indent In this section, we focus primarily on the a priori error estimate analysis for the eigenvalue problem \eqref{a3.13}. To apply the spectral approximation theory, we first prove that the discrete solution operator $\mathbb{K}_h$ converges to the exact solution operator $\mathbb{K}$ in norm.\\
\indent As stated in Theorems 5 and 6 of \cite{BerroneBF2025}, the following a priori error estimates hold for the source problem \eqref{a2.4} and its discrete counterpart \eqref{a3.14}.
\begin{lemma} \label{lem3.3}
Let $f \in H^{\max\{0,s-1\}}(\Omega)$, and $w \in H^{1+s}(\Omega) \cap H_0^1(\Omega)$ $(0 \leq r\leq s \leq k)$ be the solution
of \eqref{a2.4}. Then the unique solution $w_h$ of \eqref{a3.14} satisfies the error estimate
\begin{eqnarray}
|\| w-w_h \|| \lesssim h^{s} (|w|_{1+s}+|f|_{\max\{0,s-1\}}); \label{a3.21}
\end{eqnarray}
furthermore, it holds that
\begin{eqnarray}
\| w-w_h \|_{0} \lesssim h^{r+s} (|w|_{1+s}+|f|_{\max\{0,s-1\}}). \label{a3.22}
\end{eqnarray}
\end{lemma}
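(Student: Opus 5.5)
The energy estimate \eqref{a3.21} comes from a Strang-type argument built on the coercivity \eqref{a3.16} and continuity \eqref{a3.15} of $\mathcal{B}_h$. The $L^2$ estimate \eqref{a3.22} then follows by an Aubin--Nitsche duality argument that uses the adjoint problem \eqref{a2.8} and the regularity \eqref{a2.6}.

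For \eqref{a3.21}, I would first fix the comparison functions. Let $w_I\in\mathcal{V}_{h,k}$ be the degrees-of-freedom interpolant of $w$, and let $w_\pi$ be a piecewise polynomial of degree $k$ such as $\Pi_k^{0,E}w$ on each $E$. Standard interpolation and polynomial approximation results under Assumption \ref{ass3.1} give $|\| w-w_I\||+\|\nabla_h(w-w_\pi)\|_0\lesssim h^s|w|_{1+s}$.

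Next, set $\delta_h=w_h-w_I$. Coercivity \eqref{a3.16} gives
\[
|\|\delta_h\||^2\lesssim \mathcal{B}_h(w_h,\delta_h)-\mathcal{B}_h(w_I,\delta_h)
=\sum_E (f,\Pi_k^{0,E}\delta_h)_{0,E}-\mathcal{B}(w,\delta_h)+\bigl[\mathcal{B}(w,\delta_h)-\mathcal{B}_h(w_I,\delta_h)\bigr].
\]
Each piece is bounded separately.

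\textbf{Load term.} Using $\mathcal{B}(w,\delta_h)=(f,\delta_h)$, the first two terms combine to $(f-\Pi_{k}^{0}f,\delta_h-\Pi_k^0\delta_h)$. This is $O(h^{s})|f|_{s-1}|\|\delta_h\||$ when $s\ge1$, and $O(h)\|f\|_0\,|\|\delta_h\||$ otherwise.

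\textbf{Lower-order terms.} The $b$ and $c$ differences are handled by the $L^2$ and $\Pi_{k-1}^{0,E}\nabla$ projection properties together with the smoothness of $\boldsymbol\beta$ and $\gamma$. They contribute $h^s$ times the data norms.

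\textbf{Diffusion term.} I would write
\[
a^E(w,\delta_h)-a_h^E(w_I,\delta_h)=(\mathcal{K}\nabla w,\nabla\delta_h-\Pi_\mathcal{P}^{0,E}\nabla\delta_h)_{0,E}+(\mathcal{K}(\nabla w-\Pi_\mathcal{P}^{0,E}\nabla w_I),\Pi_\mathcal{P}^{0,E}\nabla\delta_h)_{0,E}.
\]
In the first part I would replace $\mathcal{K}\nabla w$ by its $\mathcal{P}_{k,\ell}$ projection, using that $[\mathbb{P}_{k-1}]^2\subset\mathcal{P}_{k,\ell}$. This yields $\|\mathcal{K}\nabla w-\Pi_{k-1}^0(\mathcal{K}\nabla w)\|_{0,E}\,\|\nabla\delta_h\|_{0,E}\lesssim h^s\|w\|_{1+s}$, with piecewise smoothness of $\mathcal{K}$ absorbed into the constant. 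The second part is bounded by $h^s|w|_{1+s}\,\|\Pi_\mathcal{P}^{0,E}\nabla\delta_h\|$, since $\Pi_\mathcal{P}^{0,E}$ is $L^2$-stable.

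Combining these bounds with the triangle inequality gives \eqref{a3.21}. The main obstacle is this diffusion consistency: without a stabilization term, one must know that $\|\Pi_\mathcal{P}^{0,E}\nabla v_h\|_{0,E}$ controls $\|\nabla v_h\|_{0,E}$. This is exactly the content of Assumption \ref{ass3.2} and of the coercivity \eqref{a3.16}, which I take from \cite{BerroneBF2025}.

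For \eqref{a3.22}, let $\phi=\mathbb{K}^*(w-w_h)$. By \eqref{a2.6} applied to the adjoint problem, $\|\phi\|_{1+r}\lesssim\|w-w_h\|_0$. Then
\[
\|w-w_h\|_0^2=\mathcal{B}(w-w_h,\phi)=\mathcal{B}(w-w_h,\phi-\phi_I)+\mathcal{B}(w-w_h,\phi_I).
\]
The first term is at most $|\| w-w_h\||\,h^r\|\phi\|_{1+r}$. For the second, I would use the discrete equation \eqref{a3.14} to rewrite it as
\[
\bigl[\mathcal{B}_h(w_h,\phi_I)-\mathcal{B}(w_h,\phi_I)\bigr]+\bigl[(f,\phi_I)-\textstyle\sum_E(f,\Pi_k^{0,E}\phi_I)_{0,E}\bigr].
\]
Each bracket is a consistency error. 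Repeating the splittings above, now with $\phi$ of regularity $1+r$ in place of the test function, each bracket is of size $h^{s}h^{r}$ times the data norms.

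Dividing by $\|w-w_h\|_0$ then yields $h^{r+s}$, which is \eqref{a3.22}.
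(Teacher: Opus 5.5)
The paper does not prove this lemma at all: it imports \eqref{a3.21} and \eqref{a3.22} directly from Theorems 5 and 6 of \cite{BerroneBF2025}. You instead derive them via a Strang argument followed by Aubin--Nitsche duality, and this route is sound. Your ingredients are the stability bounds \eqref{a3.15}--\eqref{a3.16}, VEM interpolation estimates and elliptic regularity; you rightly read \eqref{a3.16}, which is printed with the inequality reversed, as coercivity. What your route buys over the bare citation is that it shows which ingredients are actually used:
\begin{itemize}
\item the stabilization-free structure (Assumption \ref{ass3.2}) enters only through coercivity;
\item the consistency errors need nothing beyond $[\mathbb{P}_{k-1}(E)]^2\subset\mathcal{P}_{k,\ell}$ and $L^2$-orthogonality;
\item \eqref{a3.22} needs $H^{1+r}$-regularity of the adjoint problem \eqref{a2.8}, whereas \eqref{a2.6} is stated only for $\mathbb{K}$.
\end{itemize}
For the same reason, your remark that diffusion consistency is the main obstacle is misplaced. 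There you only use $\|\Pi_\mathcal{P}^{0,E}\nabla v\|_{0,E}\le\|\nabla v\|_{0,E}$; the reverse control is what \eqref{a3.16} encodes.

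The one step you should not leave as ``repeat the splittings above'' is the consistency bracket in the duality argument. Splitting $a^E(w_h,\phi_I)-a_h^E(w_h,\phi_I)$ as in your energy estimate leaves the term $(\mathbf{\mathcal{K}}(I-\Pi_\mathcal{P}^{0,E})\nabla w_h,\Pi_\mathcal{P}^{0,E}\nabla\phi_I)_{0,E}$. The $b$-difference leaves $(\boldsymbol{\beta}\cdot(\Pi_{k-1}^{0,E}-I)\nabla w_h,\phi_I)_{0,E}$. Bounded as in the energy estimate, both are only $O(h^s)$, because $\Pi_\mathcal{P}^{0,E}\nabla\phi_I$ and $\phi_I$ are not small.

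You have to use orthogonality a second time. Rewrite the two terms as $((I-\Pi_\mathcal{P}^{0,E})\nabla w_h,(I-\Pi_\mathcal{P}^{0,E})(\mathbf{\mathcal{K}}\Pi_\mathcal{P}^{0,E}\nabla\phi_I))_{0,E}$ and $((\Pi_{k-1}^{0,E}-I)\nabla w_h,(I-\Pi_{k-1}^{0,E})(\boldsymbol{\beta}\phi_I))_{0,E}$. In the first, compare $\mathbf{\mathcal{K}}\Pi_\mathcal{P}^{0,E}\nabla\phi_I$ with $\mathbf{\mathcal{K}}\nabla\phi$ to gain $h^r$. In the second, use elementwise smoothness of $\boldsymbol{\beta}$ to gain $h$. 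This two-sided splitting is the one behind the first inequalities in \eqref{a3.37} and \eqref{a3.38}. With it, your $h^{r+s}$ bound goes through.
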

\begin{remark}\label{rem3.4}  For the solution $w^*$ of the adjoint source problem \eqref{a2.8} and the solution $w_h^*$ of the discrete adjoint source problem \eqref{a3.19}, similar a priori error estimates as those in Lemma \ref{lem3.3} as hold.
\end{remark}
From \eqref{a2.4} and \eqref{a2.5}, as well as \eqref{a3.14} and \eqref{a3.17}, we have $w=\mathbb{K}f$ and $w_h=\mathbb{K}_h f$. As the mesh size $h \to 0$, combining \eqref{a2.6} and \eqref{a3.21} yields $|\| \mathbb{K}_h-\mathbb{K} \|| \to 0$, and combining \eqref{a2.6} and \eqref{a3.22} yields $\| \mathbb{K}_h-\mathbb{K} \|_{0} \to 0$.\\
\indent Let $\lambda_j$ be an eigenvalue of \eqref{a2.2} with multiplicity $q$, the eigenvalues are sorted in ascending order based on their modulus, i.e., $\lambda_{j-1} \neq \lambda_{j} = \lambda_{j+1} = \cdots = \lambda_{j+q-1} \neq \lambda_{j+q}$, and denote the corresponding eigenspace by $M(\lambda_j)$. For the adjoint eigenvalue $\lambda_j^*$ of \eqref{a2.2}, we define $M^*(\lambda_j^*)$ analogously to $M(\lambda_j)$. According to  spectral approximation theory \cite{BaO1991}, we know that as the mesh size $h \to 0$, there exist $q$ discrete eigenvalues $\lambda_{j,h},\cdots,\lambda_{j+q-1,h}$ of \eqref{a3.13} that converge to $\lambda_j$. Let $M_h(\lambda_j)$ be the direct sum of the discrete eigenspaces corresponding to the eigenvalues $\lambda_{j,h},\cdots,\lambda_{j+q-1,h}$, and set $\widehat{\lambda_{j,h}}=1/q\sum_{i=1}^q \lambda_{j+i-1,h}$.\\
\indent For two closed subspaces ${\mathcal{M}}$ and ${\mathcal{N}}$ of $H_0^1(\Omega)$, we recall the gap between ${\mathcal{M}}$ and ${\mathcal{N}}$ with respect to the norm $|\| \cdot \||$:
$$\quad \widehat{\delta}_1(\mathcal{M},\mathcal{N})=\max(\delta_1(\mathcal{M},\mathcal{N}),\delta _1(\mathcal{N},\mathcal{M})).
$$
where
$$
\delta_1(\mathcal{M},\mathcal{N}) = \sup_{\Phi \in \mathcal{M}, |\| \Phi|\|=1}
\inf_{\Psi \in\mathcal{N}} |\| \Phi -  \Psi |\|.
$$
Analogously, we can define the gap $\widehat{\delta}_0(\mathcal{M},\mathcal{N})$ between two subspaces $\mathcal{M}$ and $\mathcal{N}$ of $L^2(\Omega)$ with respect to the norm $\|\cdot\|_0$.
Furthermore, we also recall the projection error estimate on the polynomial space $\mathbb{P}_{k}(E)$.
\begin{proposition} (see Proposition 4.2 in \cite{BeiraoBC2013} and Proposition 4.1 in \cite{MoraRiveraRodriguez2015}) \label{pro3.5}
If the first condition in Assumption \ref{ass3.1} holds, then there exists a constant $C$ independent of $h$ such that for all $s$ satisfying $0 \leq s \leq k$ and each $v \in H^{1+s}(E)$, there exists $v_\pi \in \mathbb{P}_{k}(E)$ such that
\begin{eqnarray}
\|v-v_\pi\|_{0,E} + h_E|v-v_\pi|_{1,E} \leq Ch_E^{1+s} \|v\|_{1+s,E}. \label{a3.23}
\end{eqnarray}
\end{proposition}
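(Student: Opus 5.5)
The statement is the classical polynomial approximation result on star-shaped domains. I would use the averaged Taylor polynomial and the Bramble--Hilbert lemma, with a scaling argument to obtain the powers of $h_E$. The constant must depend only on $k$ and $C_\mathcal{T}$.

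\textbf{Integer $s$.} Let $B\subset E$ be the ball of radius $\rho \geq C_\mathcal{T} h_E$ given by the first condition of Assumption \ref{ass3.1}, so that $E$ is star-shaped with respect to every point of $B$. For $m=1+s$ (integer, $1\le m\le k+1$) take
\[
v_\pi:=Q^{m}v\in\mathbb{P}_{m-1}(E)\subset\mathbb{P}_k(E),
\]
the averaged Taylor polynomial of degree $m-1$ of $v$ over $B$ (Brenner--Scott, Chapter 4). The Bramble--Hilbert lemma in the form of Dupont--Scott gives, for $0\le j\le m$,
\[
|v-Q^{m}v|_{j,E}\le C\, h_E^{m-j}|v|_{m,E}.
\]
The constant $C$ depends only on $m$ and the chunkiness parameter $h_E/\rho\le C_\mathcal{T}^{-1}$. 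Taking $j=0$ and $j=1$ and multiplying the latter by $h_E$ yields \eqref{a3.23} with $|v|_{1+s,E}\le\|v\|_{1+s,E}$.

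The step that needs care is that this constant does not depend on $h$. I would handle it with the standard scaling argument: map $E$ to $\hat E$ of unit diameter by $x\mapsto x/h_E$. Then $\hat E$ is star-shaped with respect to a ball of radius $\ge C_\mathcal{T}$, so the Dupont--Scott constant is uniform. Transforming back produces exactly the factors $h_E^{m-j}$.

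\textbf{Fractional $s$.} For non-integer $s\in(0,k)$ I would interpolate. Write $s=n+\theta$ with $n$ integer and $\theta\in(0,1)$. The error map $v\mapsto v-Q^{n+1}v$ is linear. It is bounded from $H^{1+n}(E)$ and, using the integer case with $m=n+2$ since $Q^{n+2}v\in\mathbb{P}_{n+1}(E)\subset\mathbb{P}_k(E)$, also from $H^{2+n}(E)$. In both cases the target norm is $\|\cdot\|_{0,E}+h_E|\cdot|_{1,E}$.

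The main obstacle is that the operator changes with $m$ ($Q^{n+1}$ versus $Q^{n+2}$), so it cannot be interpolated directly. I would instead fix the single operator $Q^{n+2}$ (or $Q^{k+1}$) throughout. The averaged Taylor polynomial of degree $n+1$ is still stable in $H^{1+n}$, so the lower endpoint bound also holds for this fixed operator. Then interpolation between $H^{1+n}$ and $H^{2+n}$, performed on the reference element $\hat E$ to keep the constants uniform before scaling back, gives the bound with $\|v\|_{1+s,E}$.

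Alternatively, I would cite the fractional Bramble--Hilbert results directly, as Proposition 4.2 in \cite{BeiraoBC2013} does. Since the proposition is quoted from the literature, it is enough to verify that its hypotheses coincide with the first condition of Assumption \ref{ass3.1}, which they do.
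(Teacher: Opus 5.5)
The paper gives no argument of its own for this proposition. It quotes Proposition 4.2 of \cite{BeiraoBC2013} and Proposition 4.1 of \cite{MoraRiveraRodriguez2015}, which rest on the averaged-Taylor-polynomial/Bramble--Hilbert theory of Brenner--Scott and Dupont--Scott. Your treatment of integer $s$ is exactly that classical proof: the constant depends only on $k$ and the chunkiness $h_E/\rho\le C_{\mathcal{T}}^{-1}$, and the factors $h_E^{m-j}$ appear directly. Your fallback of citing the fractional-order Bramble--Hilbert lemma is what the paper does, so along that route the proposal is fine.

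The self-contained interpolation argument for $s=n+\theta\notin\mathbb{N}$, however, does not work as written.

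\emph{Why it fails.} Interpolation needs norms, so on a rescaled element $\hat E$ you would interpolate $T:=I-Q^{n+2}$ between the full norms of $H^{n+1}(\hat E)$ and $H^{n+2}(\hat E)$. This gives $\|T\hat v\|_{1,\hat E}\lesssim\|\hat v\|_{1+s,\hat E}$. Scaling back uses $\|Tv\|_{0,E}+h_E|Tv|_{1,E}=h_E(\|T\hat v\|_{0,\hat E}+|T\hat v|_{1,\hat E})$ and $h_E|\hat v|_{j,\hat E}=h_E^{j}|v|_{j,E}$. So the lower-order parts of the full norm return as $h_E^{j}|v|_{j,E}$ for $0\le j\le n+1$. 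The term $j=0$ is $\|v\|_{0,E}$ with no power of $h_E$, and \eqref{a3.23} is not obtained.

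\emph{The missing step.} Use polynomial invariance before scaling. Since $Q^{n+2}$ reproduces $\mathbb{P}_{n+1}$, you have $T\hat v=T(\hat v-p)$ for all $p\in\mathbb{P}_{n+1}$. You then need a fractional Deny--Lions inequality $\inf_{p\in\mathbb{P}_{n+1}}\|\hat v-p\|_{1+s,\hat E}\lesssim|\hat v|_{1+s,\hat E}$, with a constant uniform over all rescaled elements. This is provable:
\begin{itemize}
\item pick $p$ so that every $D^\alpha(\hat v-p)$ with $|\alpha|\le n+1$ has zero mean on $\hat E$;
\item for $|\alpha|=n+1$, use $\|g-\bar g\|_{0,\hat E}^2\le|\hat E|^{-1}\mathrm{diam}(\hat E)^{2+2\theta}|g|_{\theta,\hat E}^2$, together with $|\hat E|\ge\pi C_{\mathcal{T}}^2$;
\item for lower orders, use the Poincar\'e inequality on domains star-shaped with respect to a ball.
\end{itemize}

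\emph{Uniformity of the interpolation space.} There is no single reference element, so the embedding $H^{1+s}(\hat E)\hookrightarrow(H^{n+1}(\hat E),H^{n+2}(\hat E))_{\theta,2}$ must hold with a constant uniform over the family. This follows from the uniform Lipschitz character of the rescaled elements, via an extension operator.

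With both ingredients, $h_E|\hat v|_{1+s,\hat E}=h_E^{1+s}|v|_{1+s,E}$ gives the claim, even with the seminorm on the right. Otherwise, simply cite the fractional Dupont--Scott estimate, as the references do.
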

\indent Now, by spectral approximation theory \cite{BaO1991}, we establish the following a priori error estimates for the approximation spaces and approximate eigenvalues.
\begin{theorem} \label{th3.1}
Let $\lambda_j$ be an eigenvalue of \eqref{a2.2} with multiplicity $q$, and let $\lambda_{j,h}$ be an eigenvalue of \eqref{a3.13} that converges to $\lambda_j$. Assume that $M(\lambda_j) \subset H^{1+s}(\Omega)$ with $k \geq s \geq r$. Then the following estimates hold:
\begin{align}
&\widehat{\delta}_1(M(\lambda_j),M_h(\lambda_j))  \lesssim h^{s}, \label{a3.25}\\
&\widehat{\delta}_0(M(\lambda_j),M_h(\lambda_j))   \lesssim h^{r+s}, \label{a3.26}\\
& | \lambda_j - \widehat{\lambda_{j,h}} | \lesssim h^{2s}. \label{a3.27}
\end{align}
\end{theorem}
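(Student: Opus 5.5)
We will use the Babu\v{s}ka--Osborn theory \cite{BaO1991} for compact operators. The first step is to note that the eigenpairs of \eqref{a2.2} and \eqref{a3.13} correspond to those of $\mathbb{K}$ and $\mathbb{K}_h$: $\lambda$ is an eigenvalue of \eqref{a2.2} if and only if $\mu=1/\lambda$ is an eigenvalue of $\mathbb{K}$, and $\lambda_h$ is an eigenvalue of \eqref{a3.13} if and only if $1/\lambda_h$ is an eigenvalue of $\mathbb{K}_h$. The eigenvectors and the generalized eigenspaces coincide, and the adjoint problems correspond in the same way to $\mathbb{K}^*$ and $\mathbb{K}_h^*$. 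The discrete right-hand side in \eqref{a3.13} is $\sum_E(\Pi_k^{0,E}u_h,\Pi_k^{0,E}v_h)$, which matches \eqref{a3.14} when $f=\lambda_h u_h$, since $(u_h,\Pi_k^{0,E}v_h)_{0,E}=(\Pi_k^{0,E}u_h,\Pi_k^{0,E}v_h)_{0,E}$. $\mathbb{K}$ is compact on $L^2(\Omega)$ and on $H_0^1(\Omega)$ by \eqref{a2.6}. The discussion after Remark \ref{rem3.4} shows that $\mathbb{K}_h\to\mathbb{K}$ in norm with respect to both $|\|\cdot\||$ and $\|\cdot\|_0$. Theorems 7.1--7.3 of \cite{BaO1991} then give
\begin{align*}
\widehat{\delta}_1(M(\lambda_j),M_h(\lambda_j))&\lesssim \|(\mathbb{K}-\mathbb{K}_h)|_{M(\lambda_j)}\|_{1},\qquad
\widehat{\delta}_0(M(\lambda_j),M_h(\lambda_j))\lesssim \|(\mathbb{K}-\mathbb{K}_h)|_{M(\lambda_j)}\|_{0},\\
|\lambda_j-\widehat{\lambda_{j,h}}|&\lesssim \frac1q\sum_{i,l}\big|((\mathbb{K}-\mathbb{K}_h)\phi_i,\phi_l^*)\big|+\|(\mathbb{K}-\mathbb{K}_h)|_{M(\lambda_j)}\|\,\|(\mathbb{K}^*-\mathbb{K}_h^*)|_{M^*(\lambda_j^*)}\|,
\end{align*}
where $\{\phi_i\}$ is a basis of $M(\lambda_j)$ and $\{\phi_l^*\}$ is the dual basis of $M^*(\lambda_j^*)$.

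The next step is to estimate the restricted operator norms. For $f\in M(\lambda_j)$ we have $\mathbb{K}f=f/\lambda_j\in H^{1+s}$ with $|f|_{\max\{0,s-1\}}\lesssim \|f\|_{1+s}$. Since $M(\lambda_j)$ is finite dimensional, all norms on it are equivalent, so $\|f\|_{1+s}\lesssim\|f\|_0$. Lemma \ref{lem3.3} then gives $|\|(\mathbb{K}-\mathbb{K}_h)f\||\lesssim h^s\|f\|_0$ and $\|(\mathbb{K}-\mathbb{K}_h)f\|_0\lesssim h^{r+s}\|f\|_0$, which proves \eqref{a3.25} and \eqref{a3.26}. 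Remark \ref{rem3.4} gives the same bounds for the adjoint operators on $M^*(\lambda_j^*)$, provided $M^*(\lambda_j^*)\subset H^{1+s}$. We assume this, since the adjoint operator has coefficients of the same type. The product term is then $O(h^{2s})$ in the $|\|\cdot\||$ norm. Two points need care here: the gap \eqref{a3.25} is measured in $|\|\cdot\||$, so the constants must be handled accordingly, and for an eigenvalue with ascent larger than one the generalized eigenspace has to be used.

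The main obstacle is to show $|((\mathbb{K}-\mathbb{K}_h)\phi,\psi^*)|\lesssim h^{2s}$ for $\phi\in M(\lambda_j)$ and $\psi^*\in M^*(\lambda_j^*)$. Using the $L^2$ estimate alone gives only $h^{r+s}$, so a duality argument with consistency errors is needed. Set $w=\mathbb{K}\phi$, $w_h=\mathbb{K}_h\phi$ and $w^*=\mathbb{K}^*\psi^*$. Then
\[
((\mathbb{K}-\mathbb{K}_h)\phi,\psi^*)=\mathcal{B}(w-w_h,w^*).
\]
We insert $w_h^*=\mathbb{K}_h^*\psi^*$ and expand:
\[
\mathcal{B}(w-w_h,w^*)=\mathcal{B}(w-w_h,w^*-w_h^*)+\big[\mathcal{B}(w,w_h^*)-\mathcal{B}_h(w_h,w_h^*)\big]+\big[\mathcal{B}_h(w_h,w_h^*)-\mathcal{B}(w_h,w_h^*)\big].
\]
The first term is bounded by $h^s\cdot h^s$ using \eqref{a3.21} and its adjoint version. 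The second term equals $(\phi,w_h^*)-\sum_E(\phi,\Pi_k^{0,E}w_h^*)_{0,E}=\sum_E(\phi-\Pi_k^{0,E}\phi,w_h^*-\Pi_k^{0,E}w_h^*)_{0,E}$. We split $w_h^*=(w_h^*-w^*)+w^*$ and apply Proposition \ref{pro3.5} to both $\phi$ and $w^*$, which gives $O(h^{2s})$. The third term contains the consistency errors of $a_h$, $b_h$ and $c_h$. For each of these we subtract the polynomial projections of $w$, $w^*$ and of the coefficients, in the form $(\mathcal{K}\nabla w_h,\nabla w_h^*)-(\mathcal{K}\Pi_\mathcal{P}^{0,E}\nabla w_h,\Pi_\mathcal{P}^{0,E}\nabla w_h^*)$. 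Each such difference splits into products of two $O(h^s)$ terms, for instance $\|\nabla w_h-\Pi_\mathcal{P}^{0,E}\nabla w_h\|_{0,E}\cdot\|(I-\Pi)(\mathcal{K}\nabla w_h^*)\|_{0,E}$. To bound $\|(I-\Pi_\mathcal{P}^{0,E})\nabla w_h\|_{0,E}$ we will use that $\Pi_\mathcal{P}^{0,E}$ reproduces $[\mathbb{P}_{k-1}]^2$ together with \eqref{a3.21}. The piecewise smoothness of the coefficients gives the $O(h^s)$ bound on the second factor. This is the step that requires the most detailed work.
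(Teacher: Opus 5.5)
Your proposal is correct and follows essentially the paper's route: Babu\v{s}ka--Osborn Theorems 7.1--7.2 with Lemma~\ref{lem3.3} for the two gap estimates. For the eigenvalue you use the same split into an error-product term, the consistency error $\mathcal{B}_h(w_h,w_h^*)-\mathcal{B}(w_h,w_h^*)$ (handled as in \eqref{a3.36}--\eqref{a3.39}), and an $L^2$-projection term. The one difference is your starting point, the exact identity $((\mathbb{K}-\mathbb{K}_h)\phi,\psi^*)=\mathcal{B}(w-w_h,w^*)$. As a result your projection term is $\sum_{E}(\phi-\Pi_k^{0,E}\phi,\,w_h^*-\Pi_k^{0,E}w_h^*)_{0,E}$, the mirror image of the paper's $I\!I\!I$. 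The paper instead first replaces $((\mathbb{K}-\mathbb{K}_h)f_i,f_i^*)$ by $(f_i,(\mathbb{K}^*-\mathbb{K}_h^*)f_i^*)$, which holds only up to a higher-order projection term because $\mathbb{K}_h^*$ is not the exact $L^2$-adjoint of $\mathbb{K}_h$; your version avoids that small inaccuracy.
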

\begin{proof}
As $h \to 0$, since $|\| \mathbb{K}_h - \mathbb{K} |\|  \to 0$ and $\| \mathbb{K}_h - \mathbb{K} \|_{0}  \to 0$, by Theorem 7.1 in \cite{BaO1991}, we obtain
\begin{align}
\widehat{\delta}_1(M(\lambda_j),M_h(\lambda_j)) & \lesssim |\| (\mathbb{K}_h - \mathbb{K})|_{M(\lambda_j)} |\| ,\label{a3.28} \\
\widehat{\delta}_0(M(\lambda_j),M_h(\lambda_j)) & \lesssim \| (\mathbb{K}_h - \mathbb{K})|_{M(\lambda_j)} \|_0. \label{a3.29}
\end{align}
For any $u \in M(\lambda_j)$, from \eqref{a3.21} and \eqref{a3.22}, we derive the following estimates respectively:
\begin{align}
|\| (\mathbb{K}_h - \mathbb{K})u |\| \lesssim h^{s} (|\mathbb{K}u|_{1+s}+|u|_{\max\{0,s-1\}}),\label{a3.30} \\
\| (\mathbb{K}_h - \mathbb{K})u \|_0 \lesssim h^{r+s} (|\mathbb{K}u|_{1+s}+|u|_{\max\{0,s-1\}}) .\label{a3.31}
\end{align}
The estimate \eqref{a3.25} follows from combining \eqref{a3.28} and \eqref{a3.30}, and the estimate \eqref{a3.26} follows from combining \eqref{a3.29} and \eqref{a3.31}.\\
We next prove \eqref{a3.27}. Let $f_j,\cdots,f_{j+q-1}$ be the basis functions of $M(\lambda_j)$, and $f_j^*,\cdots,f_{j+q-1}^*$ be the basis functions of $M^*(\lambda_j^*)$, satisfying $(f_\imath,f_\jmath^*)_0=\delta_{\imath\jmath}$ for $\imath,\jmath=j,j+q-1$. By Theorem 7.2 in \cite{BaO1991}, we have
\begin{align}
|\lambda_j - \widehat{\lambda_{j,h}} | \leq \frac{1}{q} \sum_{i=j}^{j+q-1}|((\mathbb{K}-\mathbb{K}_h)f_i,f_i^*)|+C\|(\mathbb{K}-\mathbb{K}_h)|_{M(\lambda_j)}\|_0 \|(\mathbb{K}^*-\mathbb{K}_h^*)|_{M^*(\lambda_j^*)}\|_0. \label{a3.32}
\end{align}
Combining Remark \ref{rem3.4}, note that the second term on the right-hand side of \eqref{a3.32} is a higher-order infinitesimal compared with the first term. Thus, we only need to bound the first term on the right-hand side of \eqref{a3.32}. In fact, from \eqref{a2.5}, \eqref{a2.9} and \eqref{a3.20}, we deduce that
\begin{align}
((\mathbb{K}-\mathbb{K}_h)f_i,f_i^*) &=(f_i,(\mathbb{K}^*-\mathbb{K}_h^*)f_i^*)=\mathcal{B}(\mathbb{K} f_i,(\mathbb{K}^*-\mathbb{K}_h^*)f_i^*) \nonumber \\
&=\mathcal{B}((\mathbb{K}-\mathbb{K}_h) f_i,(\mathbb{K}^*-\mathbb{K}_h^*)f_i^*) + \mathcal{B}(\mathbb{K}_h f_i,(\mathbb{K}^*-\mathbb{K}_h^*)f_i^*) \nonumber \\
&=\mathcal{B}((\mathbb{K}-\mathbb{K}_h) f_i,(\mathbb{K}^*-\mathbb{K}_h^*)f_i^*) + \mathcal{B}(\mathbb{K}_h f_i,\mathbb{K}^*f_i^*) -\mathcal{B}(\mathbb{K}_h f_i,\mathbb{K}_h^* f_i^*) \nonumber \\
&=\mathcal{B}((\mathbb{K}-\mathbb{K}_h) f_i,(\mathbb{K}^*-\mathbb{K}_h^*)f_i^*) + (\mathbb{K}_h f_i,f_i^*) -\mathcal{B}(\mathbb{K}_h f_i,\mathbb{K}_h^* f_i^*) \nonumber \\
&=\mathcal{B}((\mathbb{K}-\mathbb{K}_h) f_i,(\mathbb{K}^*-\mathbb{K}_h^*)f_i^*) + \mathcal{B}_h(\mathbb{K}_h f_i,\mathbb{K}_h^* f_i^*) - \sum_{E \in \mathcal{T}_h}(\Pi_k^{0,E} \mathbb{K}_h f_i,f_i^*)_{0,E}\nonumber \\
&\quad  + (\mathbb{K}_h f_i,f_i^*) -\mathcal{B}(\mathbb{K}_h f_i,\mathbb{K}_h^* f_i^*) \nonumber \\
&=\mathcal{B}((\mathbb{K}-\mathbb{K}_h) f_i,(\mathbb{K}^*-\mathbb{K}_h^*)f_i^*) + \mathcal{B}_h(\mathbb{K}_h f_i,\mathbb{K}_h^* f_i^*)-\mathcal{B}(\mathbb{K}_h f_i,\mathbb{K}_h^* f_i^*) \nonumber \\
&\quad  + \sum_{E \in \mathcal{T}_h}(\mathbb{K}_h f_i-\Pi_k^{0,E} \mathbb{K}_h f_i,f_i^*)_{0,E} \nonumber \\
&:= I + I\!I + I\!I\!I. \label{a3.33}
\end{align}
For the term $I$, combining the boundedness of $\mathcal{B}(\cdot,\cdot)$ with \eqref{a3.21}, we obtain
\begin{align}
|I| \lesssim h^{2s}.\label{a3.34}
\end{align}
For the term $I\!I$, using the triangle inequality we obtain
\begin{align}
|I\!I|\leq\sum_{E \in \mathcal{T}_h} & (|a^E(\mathbb{K}_h f_i,\mathbb{K}^*_h f_i^*)-a_h^E(\mathbb{K}_h f_i,\mathbb{K}^*_h f_i^*)| + |b^E(\mathbb{K}_h f_i,\mathbb{K}^*_h f_i^*)-b_h^E(\mathbb{K}_h f_i,\mathbb{K}^*_h f_i^*)| \nonumber \\
&+|c^E(\mathbb{K}_h f_i,\mathbb{K}^*_h f_i^*)-c_h^E(\mathbb{K}_h f_i,\mathbb{K}^*_h f_i^*)|)
.\label{a3.35}
\end{align}
Herein, for any $v \in H^{1+s}(E)$, since $\Pi_{k-1}^{0,E} \nabla v \in [\mathbb{P}_{k-1}(E)]^2 \subset \mathcal{P}_{k,\ell}$, by using \eqref{a3.23}, we derive
\begin{align}
\|\nabla v - \Pi_\mathcal{P}^{0,E} \nabla v \|_{0,E} \leq \|\nabla v - \Pi_{k-1}^{0,E} \nabla v \|_{0,E} \lesssim h_E^{s} |\nabla v|_{s,E}
\lesssim h_E^{s} |v|_{1+s,E}. \label{a3.36}
\end{align}
Combining \eqref{a3.7}, \eqref{a3.8}, \eqref{a3.21}, \eqref{a3.36} and Lemma 5.3 in \cite{BeiraoBM2016}, we obtain
\begin{align}
& \sum_{E \in \mathcal{T}_h} |a^E(\mathbb{K}_h f_i,\mathbb{K}^*_h f_i^*)-a_h^E(\mathbb{K}_h f_i,\mathbb{K}^*_h f_i^*)| \nonumber \\
&= \sum_{E \in \mathcal{T}_h} |(\mathbf{\mathcal{K}} \Pi_\mathcal{P}^{0,E} \nabla (\mathbb{K}_h f_i),\Pi_\mathcal{P}^{0,E} \nabla (\mathbb{K}^*_h f_i^*))_{0,E}
    - (\mathbf{\mathcal{K}} \nabla \mathbb{K}_h f_i, \nabla \mathbb{K}^*_h f_i^*)_{0,E}  | \nonumber \\
&\leq \sum_{E \in \mathcal{T}_h} (\|\mathbf{\mathcal{K}} \nabla (\mathbb{K}_h f_i) - \Pi_\mathcal{P}^{0,E} \mathbf{\mathcal{K}} \nabla (\mathbb{K}_h f_i) \|_{0,E}
     \|\nabla \mathbb{K}^*_h f_i^*- \Pi_\mathcal{P}^{0,E} \nabla (\mathbb{K}^*_h f_i^*)\|_{0,E} \nonumber \\
 & \qquad +  \|\nabla (\mathbb{K}_h f_i) - \Pi_\mathcal{P}^{0,E} \nabla (\mathbb{K}_h f_i) \|_{0,E}
     \|\mathbf{\mathcal{K}} \nabla \mathbb{K}^*_h f_i^*- \Pi_\mathcal{P}^{0,E} \mathbf{\mathcal{K}} \nabla (\mathbb{K}^*_h f_i^*)\|_{0,E} \nonumber \\
  & \qquad +  \mathcal{K}_E^\wedge \|\nabla (\mathbb{K}_h f_i) - \Pi_\mathcal{P}^{0,E} \nabla (\mathbb{K}_h f_i) \|_{0,E}
     \|\nabla \mathbb{K}^*_h f_i^*- \Pi_\mathcal{P}^{0,E} \nabla (\mathbb{K}^*_h f_i^*)\|_{0,E} )  \nonumber \\
& \lesssim  \sum_{E \in \mathcal{T}_h} \{ (\|\mathbf{\mathcal{K}} \nabla ((\mathbb{K}_h-\mathbb{K}) f_i) \|_{0,E} + \|\mathbf{\mathcal{K}} \nabla \mathbb{K} f_i- \Pi_{k-1}^{0,E}(\mathbf{\mathcal{K}} \nabla \mathbb{K} f_i) \|_{0,E} ) \|\nabla \mathbb{K}^*_h f_i^*- \Pi_\mathcal{P}^{0,E} \nabla (\mathbb{K}^*_h f_i^*)\|_{0,E} \nonumber \\
 & \qquad + ( \frac{1}{\sqrt{\mathcal{K}_E^\vee}}\| \sqrt{\mathbf{\mathcal{K}}} \nabla ((\mathbb{K}_h-\mathbb{K}) f_i) \|_{0,E} + \|\nabla (\mathbb{K} f_i) - \Pi_{k-1}^{0,E} \nabla (\mathbb{K} f_i) \|_{0,E}  ) \|\mathbf{\mathcal{K}} \nabla \mathbb{K}^*_h f_i^*- \Pi_\mathcal{P}^{0,E} \mathbf{\mathcal{K}} \nabla (\mathbb{K}^*_h f_i^*)\|_{0,E} \nonumber \\
  & \qquad +
   ( \frac{\mathcal{K}_E^\wedge}{\sqrt{\mathcal{K}_E^\vee}}\| \sqrt{\mathbf{\mathcal{K}}} \nabla ((\mathbb{K}_h-\mathbb{K}) f_i) \|_{0,E} + \mathcal{K}_E^\wedge \|\nabla (\mathbb{K} f_i) - \Pi_{k-1}^{0,E} \nabla (\mathbb{K} f_i) \|_{0,E}  )
  \|\nabla \mathbb{K}^*_h f_i^*- \Pi_\mathcal{P}^{0,E} \nabla (\mathbb{K}^*_h f_i^*)\|_{0,E} 
\}  \nonumber \\ 
& \lesssim  \sum_{E \in \mathcal{T}_h} \{
(\sqrt{\mathcal{K}_E^\wedge} \|\sqrt{\mathbf{\mathcal{K}}} \nabla ((\mathbb{K}_h-\mathbb{K}) f_i) \|_{0,E} + \|\mathbf{\mathcal{K}}\|_{s,\infty,E}h_E^s \| \mathbb{K} f_i \|_{s+1,E} ) \|\nabla \mathbb{K}^*_h f_i^*- \Pi_\mathcal{P}^{0,E} \nabla (\mathbb{K}^*_h f_i^*)\|_{0,E} \nonumber \\
& \qquad + ( \frac{1}{\sqrt{\mathcal{K}_E^\vee}}\| \sqrt{\mathbf{\mathcal{K}}} \nabla ((\mathbb{K}_h-\mathbb{K}) f_i) \|_{0,E} + h_E^s \| \mathbb{K} f_i \|_{s+1,E} ) \|\mathbf{\mathcal{K}} \nabla \mathbb{K}^*_h f_i^*- \Pi_\mathcal{P}^{0,E} \mathbf{\mathcal{K}} \nabla (\mathbb{K}^*_h f_i^*)\|_{0,E} \nonumber \\
& \qquad +
   ( \frac{\mathcal{K}_E^\wedge}{\sqrt{\mathcal{K}_E^\vee}}\| \sqrt{\mathbf{\mathcal{K}}} \nabla ((\mathbb{K}_h-\mathbb{K}) f_i) \|_{0,E} + \mathcal{K}_E^\wedge h_E^s \| \mathbb{K} f_i \|_{s+1,E} )
  \|\nabla \mathbb{K}^*_h f_i^*- \Pi_\mathcal{P}^{0,E} \nabla (\mathbb{K}^*_h f_i^*)\|_{0,E} )  \}   \nonumber \\
& \lesssim  h^{2s}, \label{a3.37}
\end{align}
by using \eqref{a3.3}, \eqref{a3.8}, \eqref{a3.21}, \eqref{a3.22}, \eqref{a3.23} and Lemma 5.3 in \cite{BeiraoBM2016}, we get
\begin{align}
& \sum_{E \in \mathcal{T}_h} |b^E(\mathbb{K}_h f_i,\mathbb{K}^*_h f_i^*)-b_h^E(\mathbb{K}_h f_i,\mathbb{K}^*_h f_i^*)| \nonumber \\
&= \sum_{E \in \mathcal{T}_h} |(\boldsymbol{\beta} \cdot \nabla \mathbb{K}_h f_i, \mathbb{K}^*_h f_i^*)_{0,E} -
(\boldsymbol{\beta} \cdot \Pi_{k-1}^{0,E} \nabla \mathbb{K}_h f_i ,\Pi_k^{0,E} \mathbb{K}^*_h f_i^*)_{0,E} | \nonumber \\
&\leq \sum_{E \in \mathcal{T}_h}\{ \| \Pi_{k-1}^{0,E} (\boldsymbol{\beta} \cdot \nabla \mathbb{K}_h f_i) - \boldsymbol{\beta} \cdot \nabla \mathbb{K}_h f_i\|_{0,E}
\|\mathbb{K}^*_h f_i^* -\Pi_k^{0,E} \mathbb{K}^*_h f_i^*\| \nonumber \\
& \qquad + \|  \Pi_{k-1}^{0,E} (\nabla \mathbb{K}_h f_i) -  \nabla \mathbb{K}_h f_i\|_{0,E}
\| \boldsymbol{\beta} \mathbb{K}^*_h f_i^*- \Pi_{k-1}^{0,E} (\boldsymbol{\beta}  \mathbb{K}^*_h f_i^*)\| \nonumber \\
& \qquad + \|\boldsymbol{\beta}\|_{0,\infty,E} \|  \Pi_{k-1}^{0,E} (\nabla \mathbb{K}_h f_i) -  \nabla \mathbb{K}_h f_i\|_{0,E}
\|  \mathbb{K}^*_h f_i^*- \Pi_{k}^{0,E} ( \mathbb{K}^*_h f_i^*)\| \}\nonumber \\
&\lesssim \sum_{E \in \mathcal{T}_h}\{ (\| \Pi_{k-1}^{0,E} (\boldsymbol{\beta} \cdot \nabla \mathbb{K} f_i) - \boldsymbol{\beta} \cdot \nabla \mathbb{K} f_i\|_{0,E}
+\| (\boldsymbol{\beta} \cdot \nabla (\mathbb{K}-\mathbb{K}_h) f_i \|_{0,E} )
\|\mathbb{K}^*_h f_i^* -\Pi_k^{0,E} \mathbb{K}^*_h f_i^*\| \nonumber \\
& \qquad + (\|  \Pi_{k-1}^{0,E} (\nabla \mathbb{K} f_i) -  \nabla \mathbb{K} f_i\|_{0,E} + \|  \nabla (\mathbb{K}-\mathbb{K}_h) f_i\|_{0,E} )
\| \boldsymbol{\beta} \mathbb{K}^*_h f_i^*- \Pi_{k-1}^{0,E} (\boldsymbol{\beta}  \mathbb{K}^*_h f_i^*)\| \nonumber \\
& \qquad + \|\boldsymbol{\beta}\|_{0,\infty,E} (\|  \Pi_{k-1}^{0,E} (\nabla \mathbb{K} f_i) -  \nabla \mathbb{K} f_i\|_{0,E} + \|  \nabla (\mathbb{K}-\mathbb{K}_h) f_i\|_{0,E} )
\|  \mathbb{K}^*_h f_i^*- \Pi_{k}^{0,E} ( \mathbb{K}^*_h f_i^*)\| \}\nonumber \\
&\lesssim \sum_{E \in \mathcal{T}_h}\{ ( h_E^s \|\boldsymbol{\beta}\|_{s,\infty,E} |\mathbb{K} f_i|_{1+s,E}
+  \frac{\|\boldsymbol{\beta}\|_{0,\infty,E}}{ \sqrt{\mathcal{K}_E^\vee}} \| \sqrt{\mathcal{K}}\nabla (\mathbb{K}-\mathbb{K}_h) f_i \|_{0,E} )
\|\mathbb{K}^*_h f_i^* -\Pi_k^{0,E} \mathbb{K}^*_h f_i^*\| \nonumber \\
& \qquad + ( h_E^s |\mathbb{K} f_i|_{1+s,E} + \frac{1}{\sqrt{\mathcal{K}_E^\vee}} \| \sqrt{\mathcal{K}}\nabla (\mathbb{K}-\mathbb{K}_h) f_i \|_{0,E} )
\| \boldsymbol{\beta} \mathbb{K}^*_h f_i^*- \Pi_{k-1}^{0,E} (\boldsymbol{\beta}  \mathbb{K}^*_h f_i^*)\| \nonumber \\
& \qquad + ( h_E^s \|\boldsymbol{\beta}\|_{0,\infty,E} |\mathbb{K} f_i|_{1+s,E} + \frac{\|\boldsymbol{\beta}\|_{0,\infty,E}}{\sqrt{\mathcal{K}_E^\vee}} \| \sqrt{\mathcal{K}}\nabla (\mathbb{K}-\mathbb{K}_h) f_i \|_{0,E} )
\|  \mathbb{K}^*_h f_i^*- \Pi_{k}^{0,E} ( \mathbb{K}^*_h f_i^*)\| \}\nonumber \\
&\lesssim h^{2s}, \label{a3.38}
\end{align}
and by invoking Lemma 5.3 in \cite{BeiraoBM2016} again, together with \eqref{a3.3}, \eqref{a3.22} and \eqref{a3.23}, we obtain 
\begin{align}
& \sum_{E \in \mathcal{T}_h} |c^E(\mathbb{K}_h f_i,\mathbb{K}^*_h f_i^*)-c_h^E(\mathbb{K}_h f_i,\mathbb{K}^*_h f_i^*)| \nonumber \\
&= \sum_{E \in \mathcal{T}_h} |(\gamma \mathbb{K}_h f_i, \mathbb{K}^*_h f_i^*)_{0,E} -
(\gamma \Pi_k^{0,E} \mathbb{K}_h f_i ,\Pi_k^{0,E} \mathbb{K}^*_h f_i^*)_{0,E} | \nonumber \\
&\leq  \sum_{E \in \mathcal{T}_h} \{
\|\gamma \mathbb{K}_h f_i - \Pi_k^{0,E} (\gamma \mathbb{K}_h f_i) \|_{0,E} \|\mathbb{K}^*_h f_i^*-\Pi_k^{0,E} \mathbb{K}^*_h f_i^* \|_{0,E} \nonumber \\
& \qquad + \|\mathbb{K}_h f_i - \Pi_k^{0,E} \mathbb{K}_h f_i\|_{0,E} \|\gamma \mathbb{K}^*_h f_i^*-\Pi_k^{0,E} (\gamma  \mathbb{K}^*_h f_i^*) \|_{0,E} \nonumber \\
& \qquad + \|\gamma\|_{0,\infty,E}\|\mathbb{K}_h f_i - \Pi_k^{0,E} \mathbb{K}_h f_i\|_{0,E} \|\mathbb{K}^*_h f_i^*-\Pi_k^{0,E} \mathbb{K}^*_h f_i^*\|_{0,E} \} \nonumber \\
&\lesssim  \sum_{E \in \mathcal{T}_h} \{
(\|\gamma (\mathbb{K}-\mathbb{K}_h) f_i \|_{0,E} + \|\gamma \mathbb{K} f_i - \Pi_k^{0,E} (\gamma \mathbb{K} f_i) \|_{0,E})\|\mathbb{K}^*_h f_i^*-\Pi_k^{0,E} \mathbb{K}^*_h f_i^* \|_{0,E} \nonumber \\
& \qquad + (\|(\mathbb{K}-\mathbb{K}_h) f_i\|_{0,E} + \|\mathbb{K} f_i - \Pi_k^{0,E} \mathbb{K} f_i\|_{0,E})\|\gamma \mathbb{K}^*_h f_i^*-\Pi_k^{0,E} (\gamma  \mathbb{K}^*_h f_i^*) \|_{0,E} \nonumber \\
& \qquad + \|\gamma\|_{0,\infty,E} (\|(\mathbb{K}-\mathbb{K}_h) f_i\|_{0,E} + \|\mathbb{K} f_i - \Pi_k^{0,E} \mathbb{K} f_i\|_{0,E}) \|\mathbb{K}^*_h f_i^*-\Pi_k^{0,E} \mathbb{K}^*_h f_i^*\|_{0,E} \} \nonumber \\
&\lesssim  \sum_{E \in \mathcal{T}_h} \{
(\|\gamma\|_{0,\infty,E}\|(\mathbb{K}-\mathbb{K}_h) f_i \|_{0,E} + h_E^{1+s}\|\gamma \mathbb{K} f_i \|_{1+s,E})\|\mathbb{K}^*_h f_i^*-\Pi_k^{0,E} \mathbb{K}^*_h f_i^* \|_{0,E} \nonumber \\
& \qquad + (\|(\mathbb{K}-\mathbb{K}_h) f_i\|_{0,E} +  h_E^{1+s} \|\mathbb{K} f_i \|_{1+s,E})\|\gamma \mathbb{K}^*_h f_i^*-\Pi_k^{0,E} (\gamma  \mathbb{K}^*_h f_i^*) \|_{0,E} \nonumber \\
& \qquad + \|\gamma\|_{0,\infty,E} (\|(\mathbb{K}-\mathbb{K}_h) f_i\|_{0,E} + h_E^{1+s} \|\mathbb{K} f_i \|_{1+s,E}) \|\mathbb{K}^*_h f_i^*-\Pi_k^{0,E} \mathbb{K}^*_h f_i^*\|_{0,E} \} \nonumber \\
&\lesssim h^{2(r+s)}. \label{a3.39}
\end{align}
Substituting \eqref{a3.37}, \eqref{a3.38} and \eqref{a3.39} into \eqref{a3.35}, we obtain
\begin{align}
|I\!I| \lesssim h^{2s}. \label{a3.40}
\end{align}
For the term $I\!I\!I$, by using \eqref{a3.3}, \eqref{a3.23} and \eqref{a3.22}, we have
\begin{align}
|I\!I\!I| &= \sum_{E \in \mathcal{T}_h} |(\mathbb{K}_h f_i-\Pi_k^{0,E} \mathbb{K}_h f_i,f_i^*)_{0,E}| \nonumber \\
&=\sum_{E \in \mathcal{T}_h} |(\mathbb{K}_h f_i-\Pi_k^{0,E} \mathbb{K}_h f_i,f_i^*-\Pi_k^{0,E} f_i^*)_{0,E}| \nonumber \\
& \leq \sum_{E \in \mathcal{T}_h} h_E^{1+s} \|f_i^*\|_{1+s,E} \|\mathbb{K}_h f_i-\Pi_k^{0,E} \mathbb{K} f_i\|_{0,E} \nonumber \\
& \leq \sum_{E \in \mathcal{T}_h} h_E^{1+s} \|f_i^*\|_{1+s,E} \|\mathbb{K}_h f_i -\mathbb{K} f_i + \mathbb{K} f_i -\Pi_k^{0,E} \mathbb{K} f_i\|_{0,E} \nonumber \\
& \leq \sum_{E \in \mathcal{T}_h} h_E^{1+s} \|f_i^*\|_{1+s,E} \{ \|\mathbb{K}_h f_i -\mathbb{K} f_i\|_{0,E} + \|\mathbb{K} f_i -\Pi_k^{0,E} \mathbb{K} f_i\|_{0,E} \}  \nonumber \\
& \leq h^{1+s} \|f_i^*\|_{1+s,\Omega} \{ \|(\mathbb{K}_h -\mathbb{K}) f_i\|_{0} + h^{1+s} \|\mathbb{K} f_i\|_{1+s,\Omega} \} \nonumber \\
& \lesssim h^{2(r+s)}. \label{a3.41}
\end{align}
Substituting \eqref{a3.33}, \eqref{a3.40} and \eqref{a3.41} into \eqref{a3.33}, and combining with \eqref{a3.32} and \eqref{a3.33}, we obtain \eqref{a3.27}.
\end{proof}

\section{Numerical experiments}
\indent In this section, we will present several numerical examples on polygonal meshes to verify our theoretical analysis. Our numerical experiments are implemented in MATLAB based on the software package developed in \cite{YuY2022}. 
When computing the high-order projections, we are inspired by the works in \cite{BeiraoBM2014, Beiraobmr2016}.
We select the unit square $\Omega=[0,1]^2$ as the computational domain, and adopt three different polygonal meshes for $\Omega$, as illustrated in Figure \ref{fig1}. To determine the parameter $\ell$ in \eqref{a3.6}, we refer to the setting in \cite{BerroneBF2025} and list the selected values of $\ell$ corresponding to $k=2, 3$ and $4$ in Table \ref{tab1}.

\begin{figure}[htbp]
  \centering
  \begin{subfigure}{0.3\textwidth}
    \centering
    \includegraphics[width=1.2\linewidth]{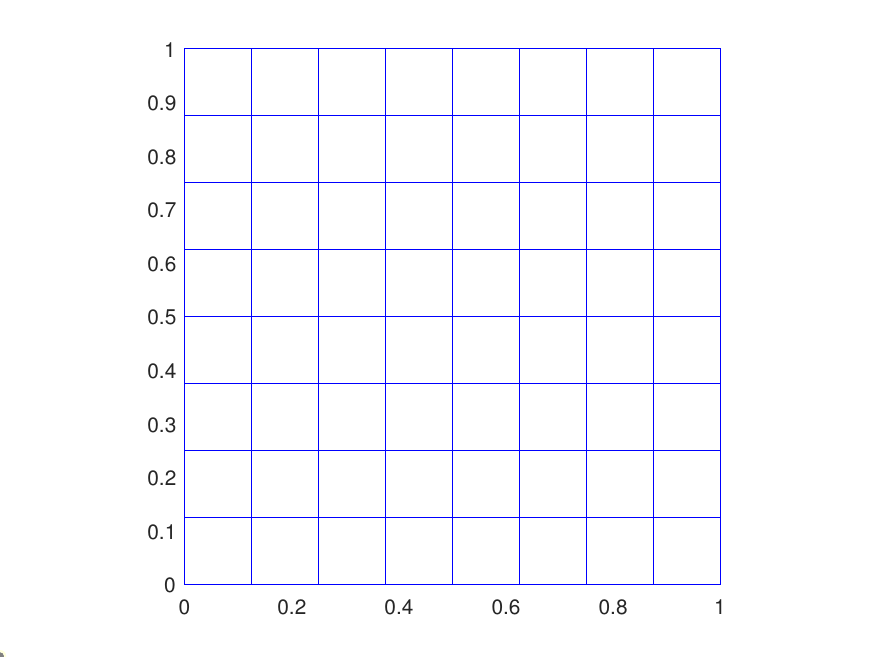}
    \subcaption*{$\mathcal{T}_h^1:$Convex quadrilateral mesh}
  \end{subfigure}
  \begin{subfigure}{0.3\textwidth}
    \centering
    \includegraphics[width=1.2\linewidth]{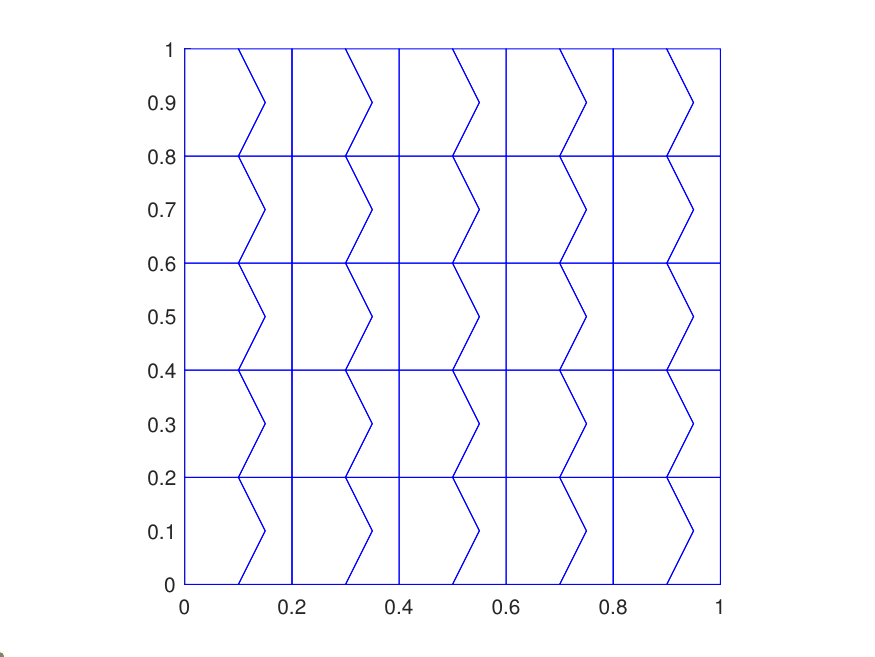}
     \caption*{$\mathcal{T}_h^2:$Pentagonal mesh}
  \end{subfigure}
  \begin{subfigure}{0.3\textwidth}
    \centering
    \includegraphics[width=1.2\linewidth]{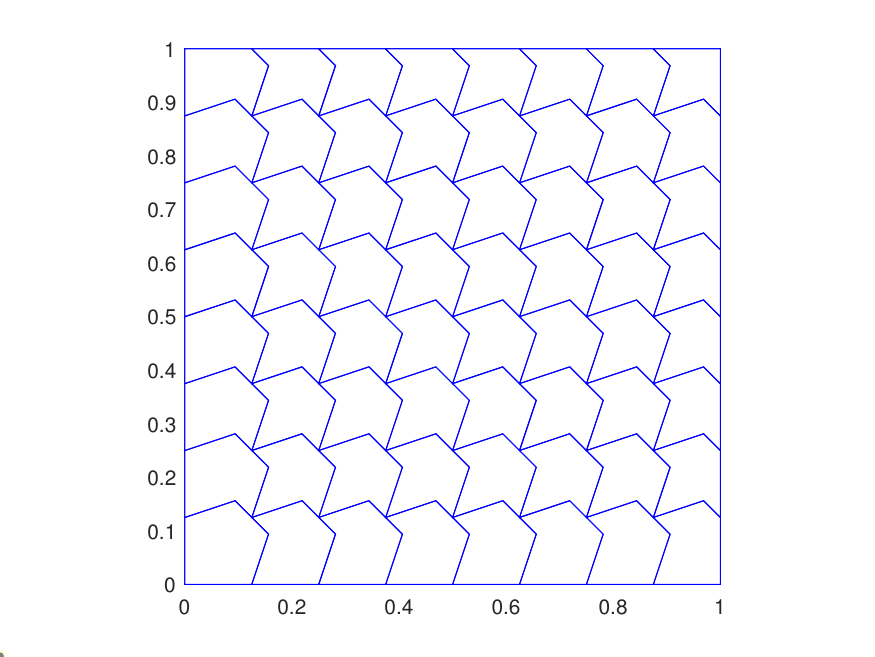}
    \caption*{$\mathcal{T}_h^3:$Concave octagon mesh}
  \end{subfigure}
  \caption{Different polygonal discretizations of the unit square.}
  \label{fig1}
\end{figure}

\begin{table}[h] \small
\setlength{\tabcolsep}{16pt}
  \begin{center}
  \caption{Values of $\ell$ corresponding to different values of $k$ and different mesh discretizations .}\label{tab1}
    \begin{tabular}{cccc}
\toprule
meshs & $k=2$ & $k=3$ & $k=4$ \\
\midrule
$\mathcal{T}_h^1$ &$\ell=1$ & $\ell=1$ & $\ell=1$ \\
$\mathcal{T}_h^2$ &$\ell=2$ & $\ell=2$ & $\ell=2$ \\
$\mathcal{T}_h^3$ &$\ell=3$ & $\ell=3$ & $\ell=3$ \\
\bottomrule
\end{tabular}
  \end{center}
\end{table}
Furthermore, in our numerical experiments, without loss of generality, we fix $\gamma \equiv 0$ and consider the following different values of the coefficients $\mathbf{\mathcal{K}}$ and $\boldsymbol{\beta}$.
\begin{itemize}
    \itemsep 0mm
    \item Case 1. $\mathbf{\mathcal{K}}=\begin{bmatrix} 1 & 0 \\ 0 & 1 \end{bmatrix}$, $\boldsymbol{\beta}= \begin{bmatrix} 1  \\ 0  \end{bmatrix}$;
    \item Case 2, $\mathbf{\mathcal{K}}=\begin{bmatrix} 1 & 0 \\ 0 & 1 \end{bmatrix}$, $\boldsymbol{\beta}= \begin{bmatrix} 10  \\ 0  \end{bmatrix}$;
    \item Case 3, $\mathbf{\mathcal{K}}=\begin{bmatrix} 8\times 10^{-3} & 0 \\ 0 & 1 \end{bmatrix}$, $\boldsymbol{\beta}= \begin{bmatrix} 0  \\ 0  \end{bmatrix}$.
\end{itemize}

\textbf{Example 1:} For Case 1, we compute the first five eigenvalues of \eqref{a3.13}
on meshes $\mathcal{T}_h^1$, $\mathcal{T}_h^2$ and $\mathcal{T}_h^3$ with $k=2$, $k=3$ and $k=4$, respectively. The exact eigenvalues are given by $\lambda_{1}=0.25 + 2\times\pi^2$, $\lambda_{2}=\lambda_{3}=0.25 + 5\times\pi^2$, $\lambda_{4}=0.25 + 8\times\pi^2$ and $\lambda_{5}=0.25 + 10\times\pi^2$. To characterize the convergence behavior, we plot the error curves in Figures \ref{fig2}, \ref{fig3} and \ref{fig4}.\\
\indent It can be observed from Figure \ref{fig2} and Figure \ref{fig3} that our stabilization-free virtual element method achieves the optimal convergence order for all the three meshes $\mathcal{T}_h^1$, $\mathcal{T}_h^2$ and $\mathcal{T}_h^3$.
In particular, when $k=2$, the fifth eigenvalue attains a better convergence order on mesh $\mathcal{T}_h^2$. However, for $k=4$, while the first five eigenvalues achieve the optimal convergence order on mesh $\mathcal{T}_h^2$, the convergence behavior of the first eigenvalue is unsatisfactory in the later stages on meshes $\mathcal{T}_h^1$ and $\mathcal{T}_h^3$.\\

\begin{figure}[htbp]
    \centering
    \setlength{\fboxsep}{0pt}
    \begin{minipage}[c]{0.32\linewidth}
        \centering
        \includegraphics[width=\linewidth]{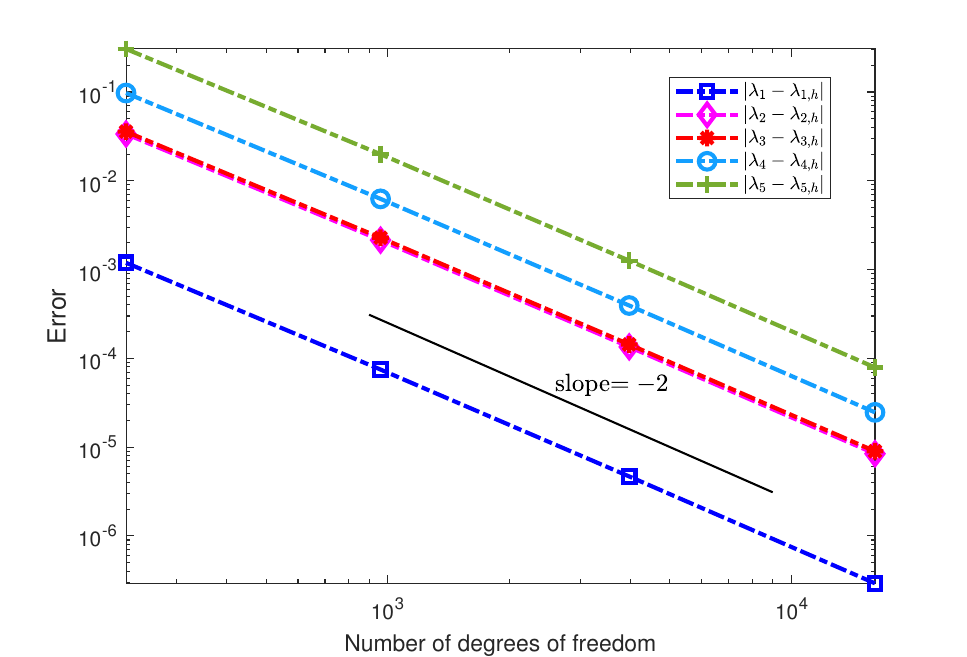}
    \end{minipage}
    \hfill{}
    \begin{minipage}[c]{0.32\linewidth}
        \centering
        \includegraphics[width=\linewidth]{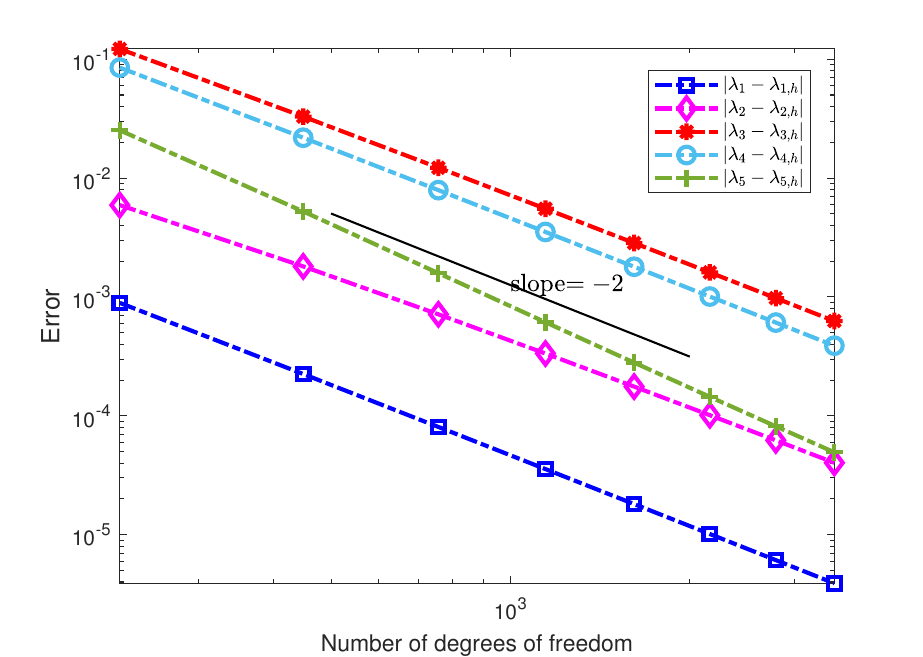}
    \end{minipage}%
    \hfill{}
    \begin{minipage}[c]{0.32\linewidth}
        \centering
        \includegraphics[width=\linewidth]{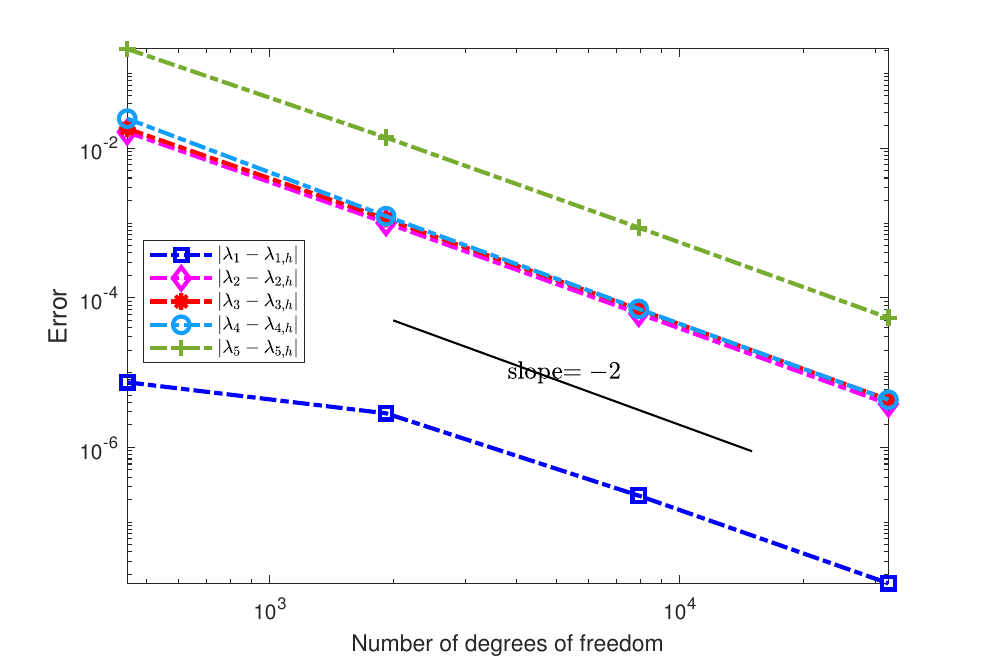}
    \end{minipage}
    \caption{In Case 1, the error curves for $k=2$: left ($\mathcal{T}_h^1$), middle ($\mathcal{T}_h^2$), right ($\mathcal{T}_h^3$).}
    \label{fig2}
\end{figure}

\begin{figure}[htbp]
    \centering
    \setlength{\fboxsep}{0pt}
    \begin{minipage}[c]{0.32\linewidth}
        \centering
        \includegraphics[width=\linewidth]{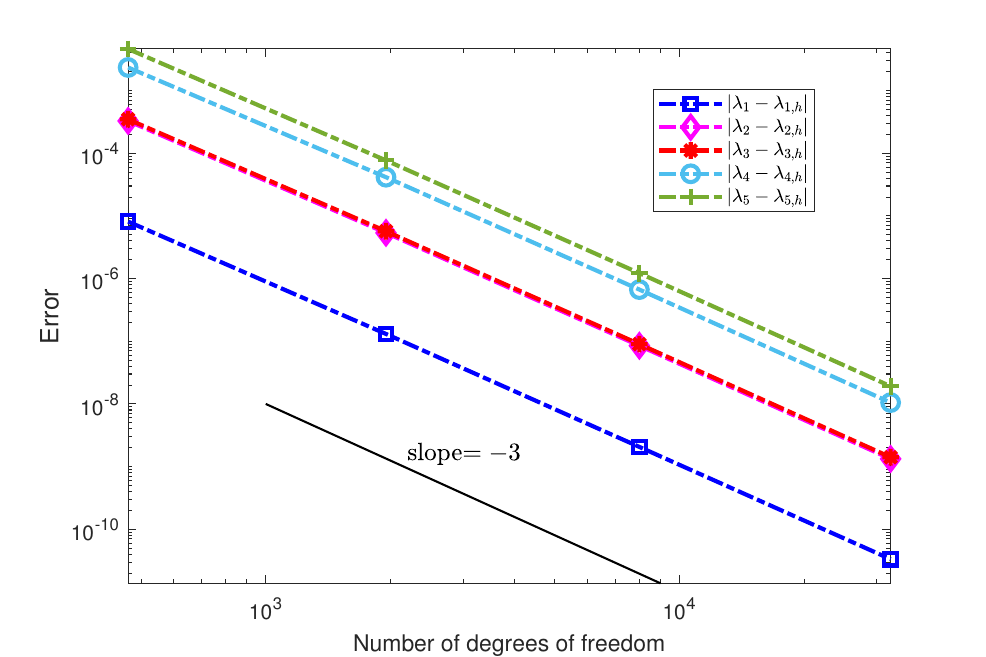}
    \end{minipage}
    \hfill{}
    \begin{minipage}[c]{0.32\linewidth}
        \centering
        \includegraphics[width=\linewidth]{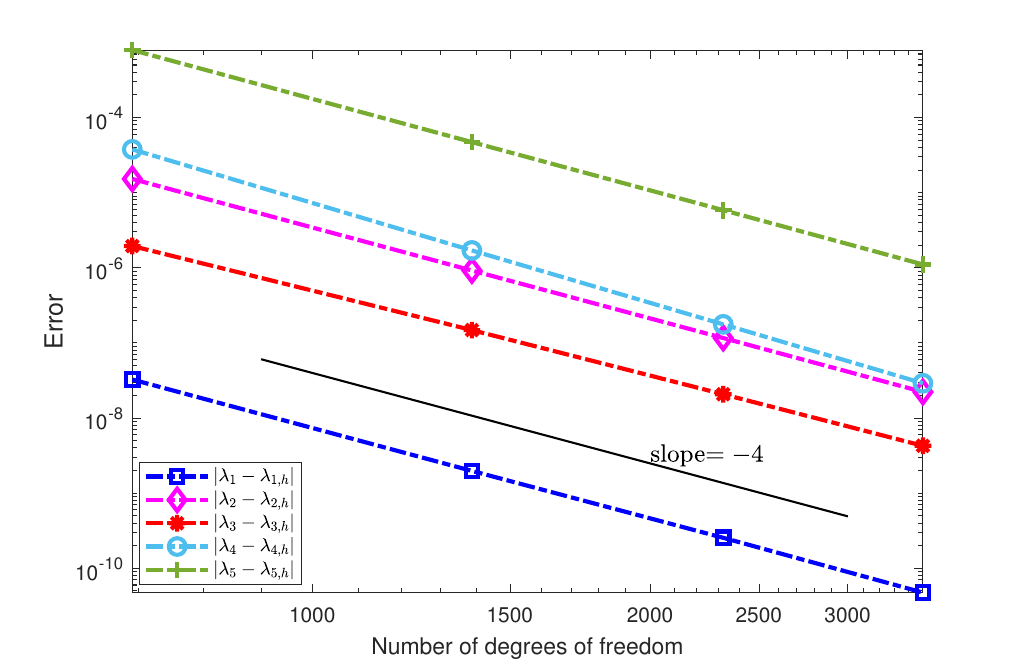}
    \end{minipage}%
    \hfill{}
    \begin{minipage}[c]{0.32\linewidth}
        \centering
        \includegraphics[width=\linewidth]{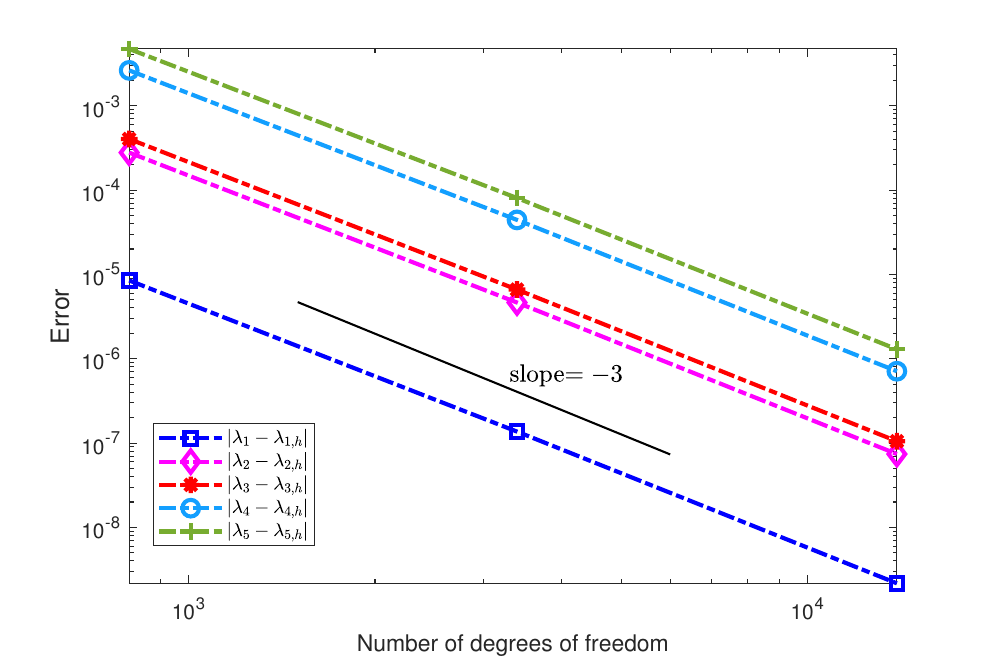}
    \end{minipage}
    \caption{In Case 1, the error curves for $k=3$: left ($\mathcal{T}_h^1$), middle ($\mathcal{T}_h^2$), right ($\mathcal{T}_h^3$).}
    \label{fig3}
\end{figure}

\begin{figure}[htbp]
    \centering
    \setlength{\fboxsep}{0pt}
    \begin{minipage}[c]{0.32\linewidth}
        \centering
        \includegraphics[width=\linewidth]{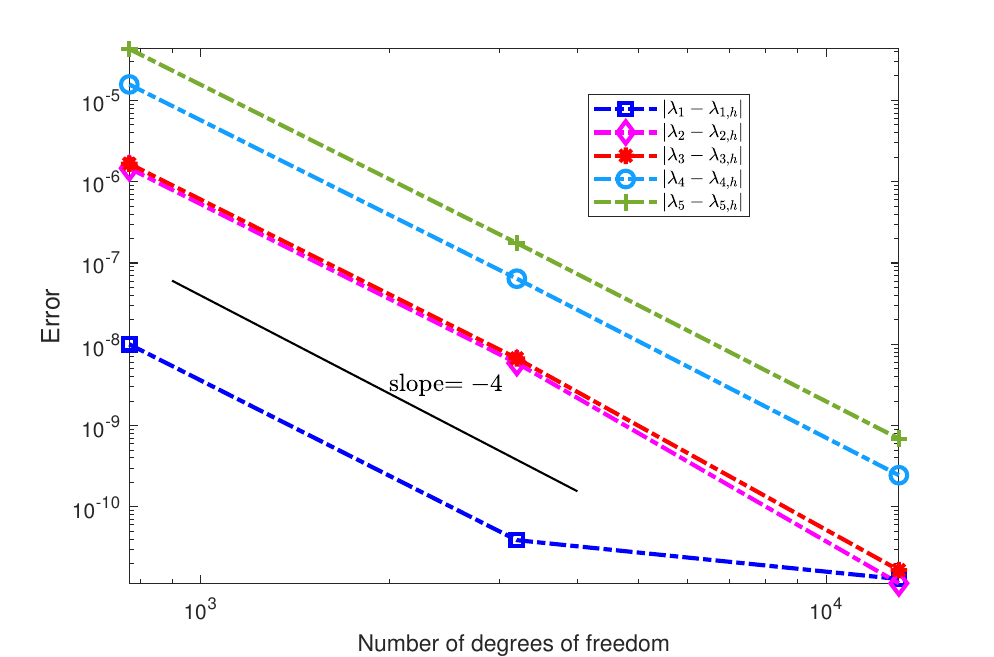}
    \end{minipage}
    \hfill{}
    \begin{minipage}[c]{0.32\linewidth}
        \centering
        \includegraphics[width=\linewidth]{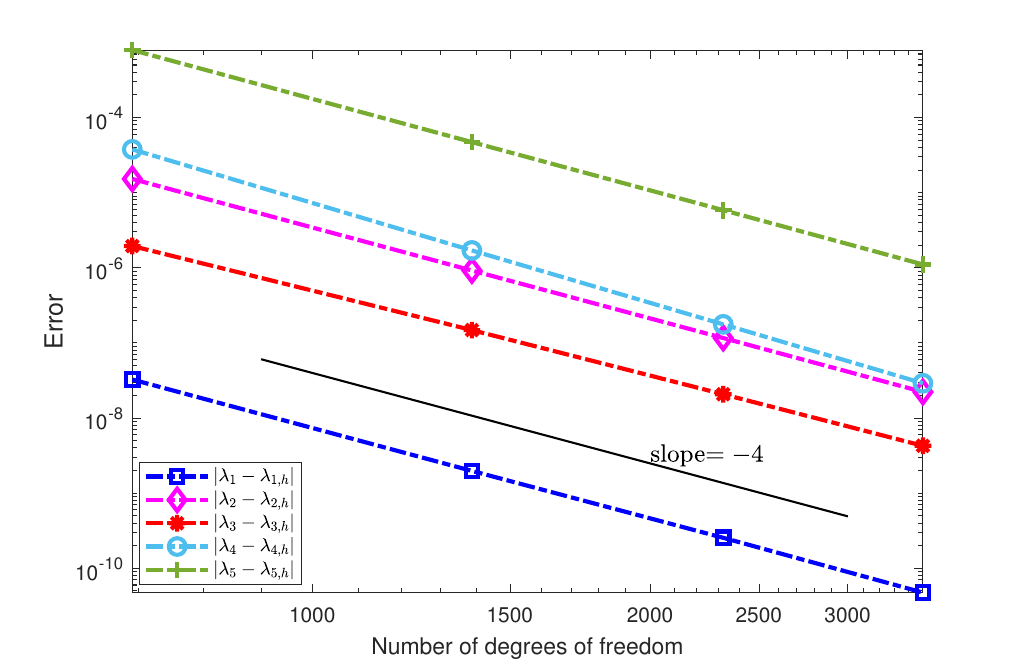}
    \end{minipage}%
    \hfill{}
    \begin{minipage}[c]{0.32\linewidth}
        \centering
        \includegraphics[width=\linewidth]{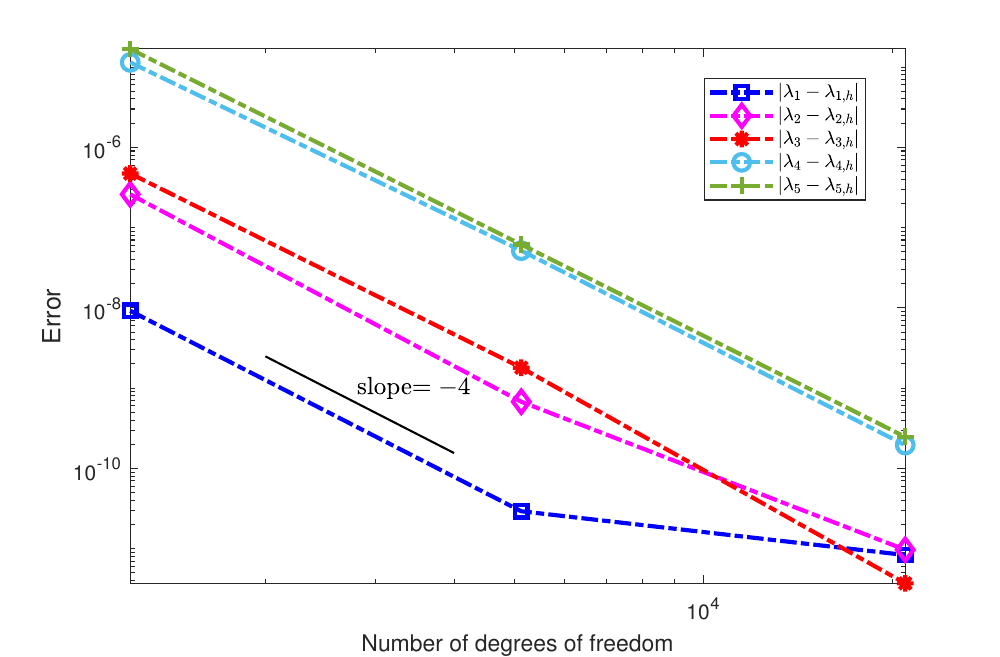}
    \end{minipage}
    \caption{In Case 1, the error curves for $k=4$: left ($\mathcal{T}_h^1$), middle ($\mathcal{T}_h^2$), right ($\mathcal{T}_h^3$).}
    \label{fig4}
\end{figure}

\textbf{Example 2:} For Case 2, we likewise compute the first five eigenvalues of \eqref{a3.13} with $k=2, 3, 4$ on meshes $\mathcal{T}_h^1$, $\mathcal{T}_h^2$, and $\mathcal{T}_h^3$. Here, the exact eigenvalues are $\lambda_{1}=25 + 2\pi^2$, $\lambda_{2}=\lambda_{3}=25 + 5\pi^2$, $\lambda_{4}=25 + 8\pi^2$, and $\lambda_{5}=25 + 10\pi^2$. We also plot the error curves in Figures \ref{fig5}, \ref{fig6}, and \ref{fig7}.\\

\indent As can be seen from Figure \ref{fig5}, when $k=2$, the third eigenvalue on meshes $\mathcal{T}_h^1$ and $\mathcal{T}_h^3$ exhibits somewhat unsatisfactory convergence behavior on coarse meshes; however, it ultimately achieves the optimal convergence order upon mesh refinement. Furthermore, in Figures \ref{fig5}-\ref{fig7}, the remaining cases all essentially attain the optimal convergence order.

\begin{figure}[htbp]
    \centering
    \setlength{\fboxsep}{0pt}
    \begin{minipage}[c]{0.32\linewidth}
        \centering
        \includegraphics[width=\linewidth]{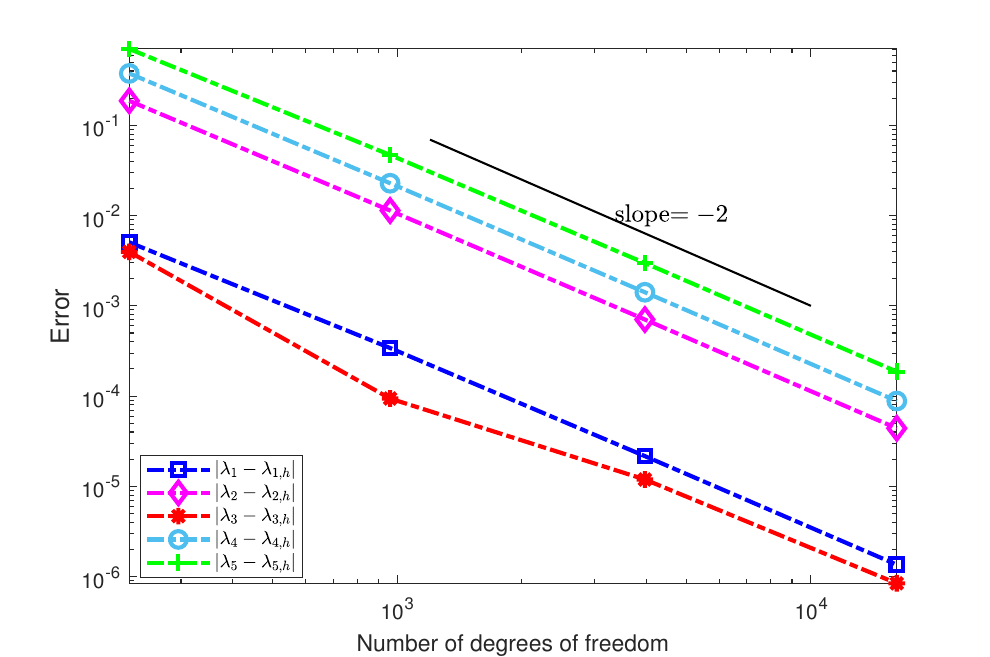}
    \end{minipage}
    \hfill{}
    \begin{minipage}[c]{0.32\linewidth}
        \centering
        \includegraphics[width=\linewidth]{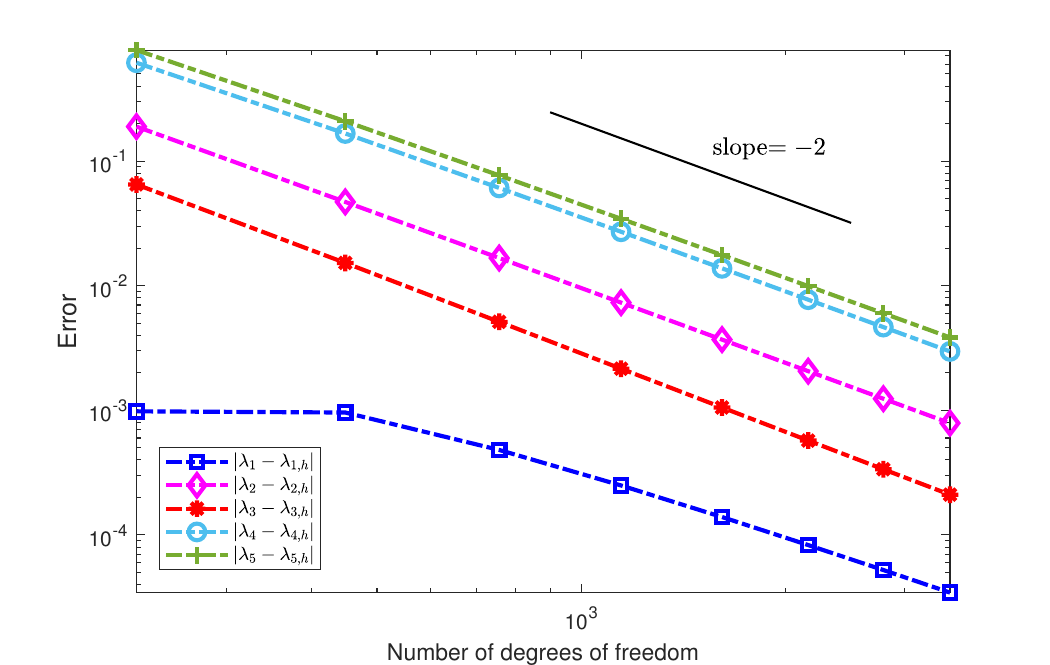}
    \end{minipage}%
    \hfill{}
    \begin{minipage}[c]{0.32\linewidth}
        \centering
        \includegraphics[width=\linewidth]{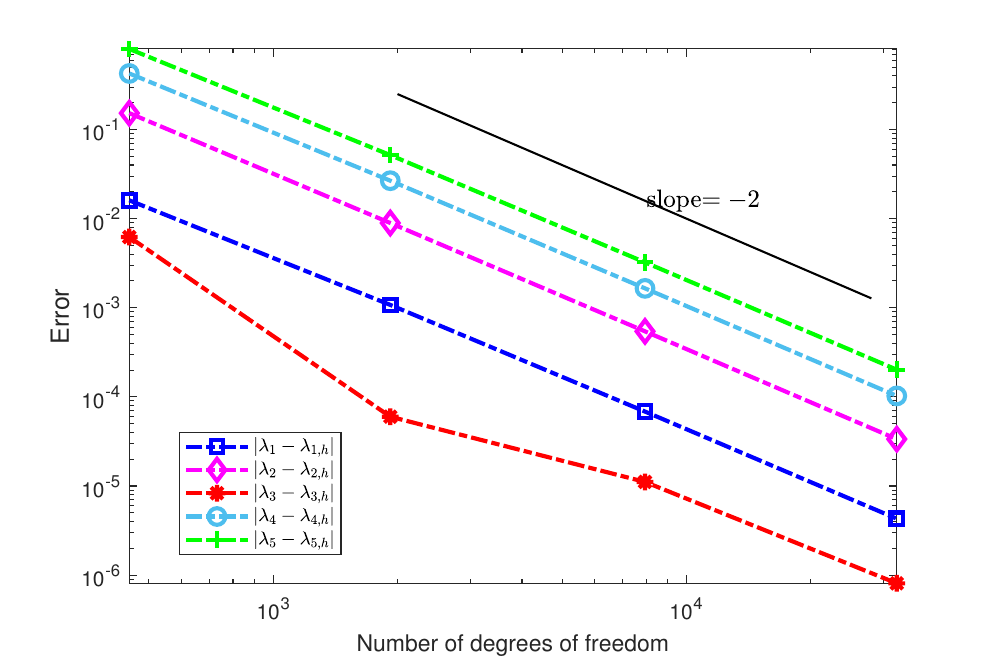}
    \end{minipage}
    \caption{In Case 2, the error curves for $k=2$: left ($\mathcal{T}_h^1$), middle ($\mathcal{T}_h^2$), right ($\mathcal{T}_h^3$).}
    \label{fig5}
\end{figure}

\begin{figure}[htbp]
    \centering
    \setlength{\fboxsep}{0pt}
    \begin{minipage}[c]{0.32\linewidth}
        \centering
        \includegraphics[width=\linewidth]{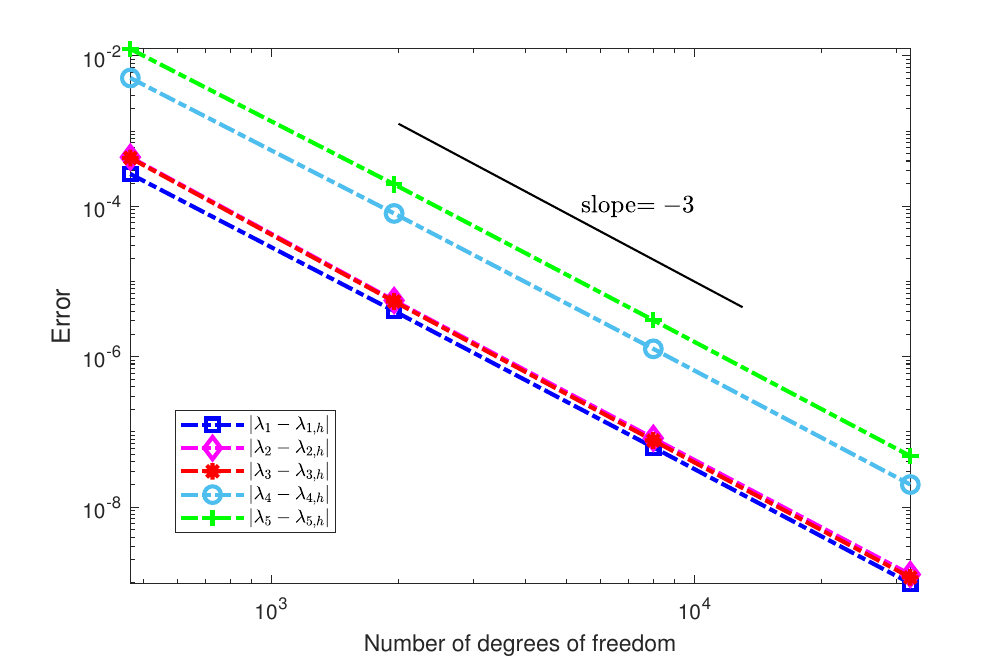}
    \end{minipage}
    \hfill{}
    \begin{minipage}[c]{0.32\linewidth}
        \centering
        \includegraphics[width=\linewidth]{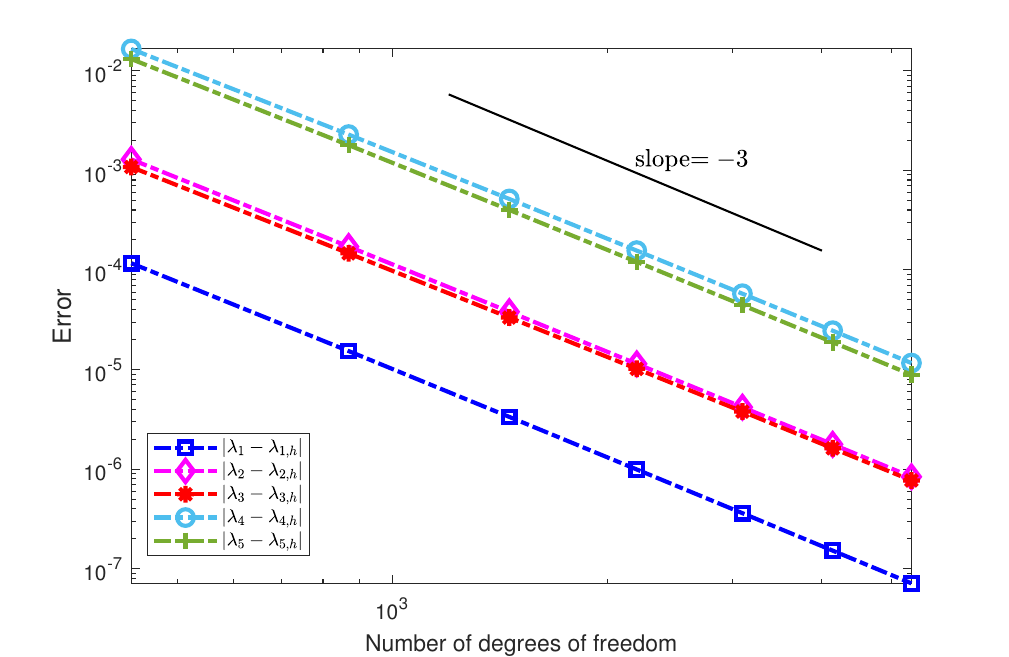}
    \end{minipage}%
    \hfill{}
    \begin{minipage}[c]{0.32\linewidth}
        \centering
        \includegraphics[width=\linewidth]{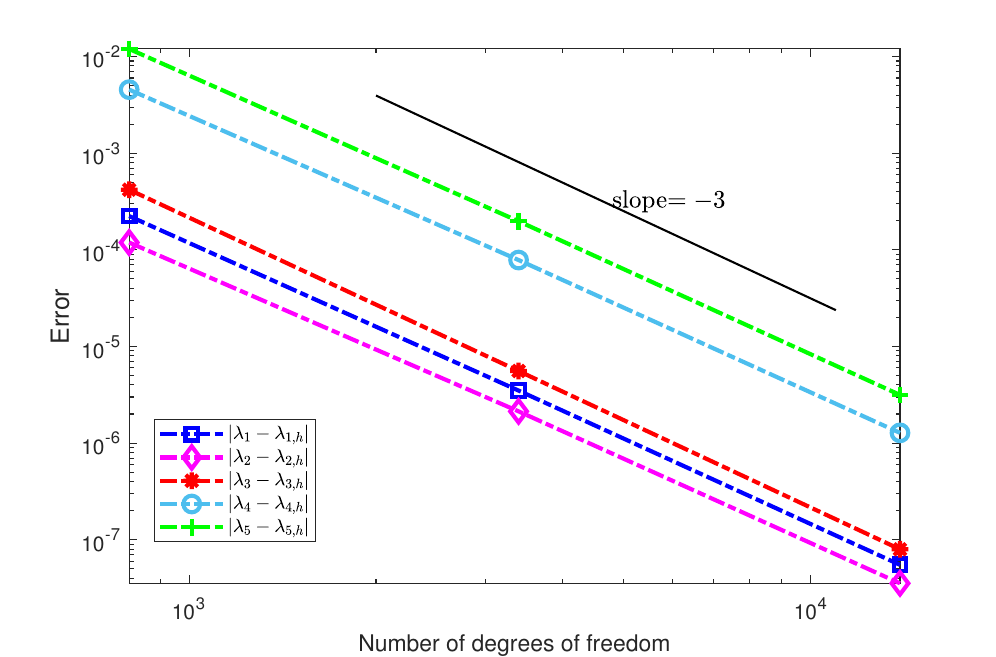}
    \end{minipage}
    \caption{In Case 2, the error curves for $k=3$: left ($\mathcal{T}_h^1$), middle ($\mathcal{T}_h^2$), right ($\mathcal{T}_h^3$).}
    \label{fig6}
\end{figure}

\begin{figure}[htbp]
    \centering
    \setlength{\fboxsep}{0pt}
    \begin{minipage}[c]{0.32\linewidth}
        \centering
        \includegraphics[width=\linewidth]{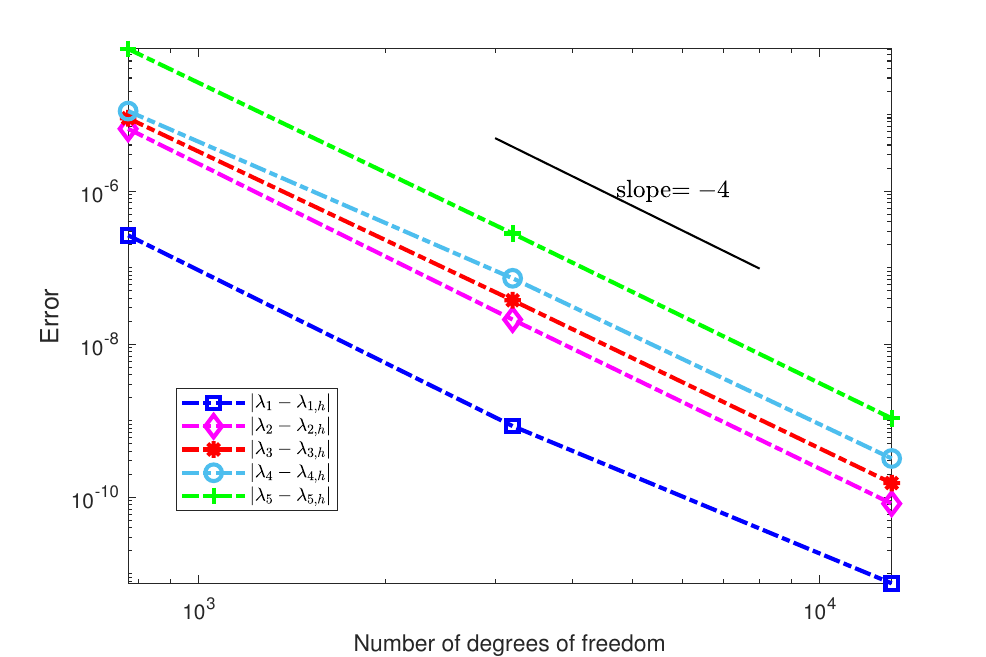}
    \end{minipage}
    \hfill{}
    \begin{minipage}[c]{0.32\linewidth}
        \centering
        \includegraphics[width=\linewidth]{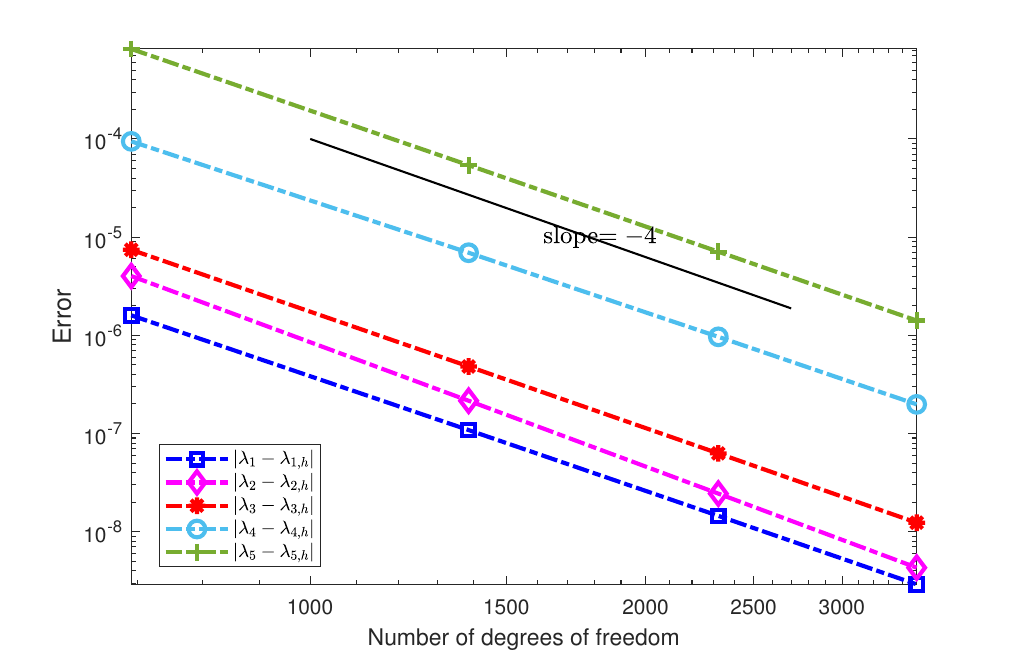}
    \end{minipage}%
    \hfill{}
    \begin{minipage}[c]{0.32\linewidth}
        \centering
        \includegraphics[width=\linewidth]{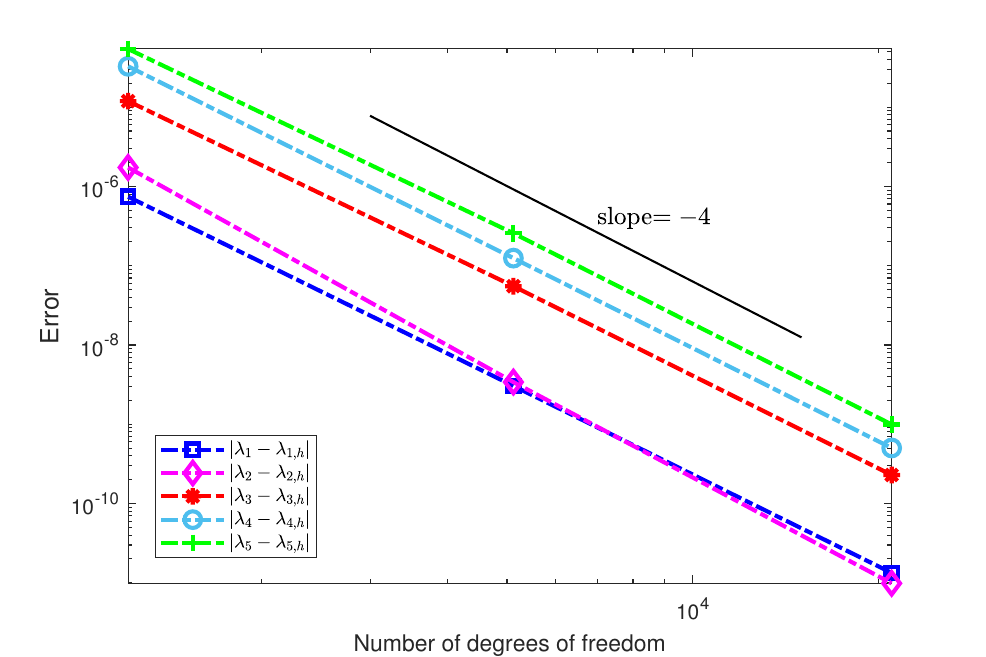}
    \end{minipage}
    \caption{In Case 2, the error curves for $k=4$: left ($\mathcal{T}_h^1$), middle ($\mathcal{T}_h^2$), right ($\mathcal{T}_h^3$).}
    \label{fig7}
\end{figure}

\textbf{Example 3:} In this example, we will compare the performance of the stabilization-free virtual element method (SFVEM) with the standard virtual element method (SVEM).
The SVEM discretization for the problem \eqref{a2.2} is to find $(\lambda_h, u_h) \in \mathbb{C} \times \mathcal{V}_{h,k}$ such that
\begin{eqnarray}
\sum_{E \in \mathcal{T}_h} \hat{a}_h^E(u_h,v_h)+b_h^E(u_h,v_h)+c_h^E(u_h,v_h) = \lambda_h \sum_{E \in \mathcal{T}_h}  \left(\Pi_k^{0,E} u_h,\Pi_k^{0,E} v_h\right)_{0,E},~ \forall v_h \in~\mathcal{V}_{h,k}, \label{a4.1}
\end{eqnarray}
where $\hat{a}_h^E(u_h,v_h)=\left(\mathcal{K}\Pi_{k-1}^{0,E}\nabla u_h,\Pi_{k-1}^{0,E}\nabla v_h\right)_E + \|\mathcal{K}\|_{0,\infty,E}\, s_a^E\!\left((I-\Pi_k^{\nabla,E})u_h,(I-\Pi_k^{\nabla,E})v_h\right),$ and $s_a^E(\cdot,\cdot)$ is a local symmetric dofi-dofi type stabilizing bilinear form. Subsequently, for Case 3, we compute the first eigenvalues of both \eqref{a3.13} and \eqref{a4.1} with $k=2$ on meshes $\mathcal{T}_h^1$, $\mathcal{T}_h^2$, and $\mathcal{T}_h^3$, respectively.
We use the exact eigenvalue $\lambda_1=(1+8 \times 10^{-3})\pi^2$ as the reference value to plot the error curves in Figure \ref{fig8} to illustrate the convergence behavior of the approximate eigenvalues.

\begin{figure}[htbp]
    \centering
    \setlength{\fboxsep}{0pt}
    \begin{minipage}[c]{0.32\linewidth}
        \centering
        \includegraphics[width=\linewidth]{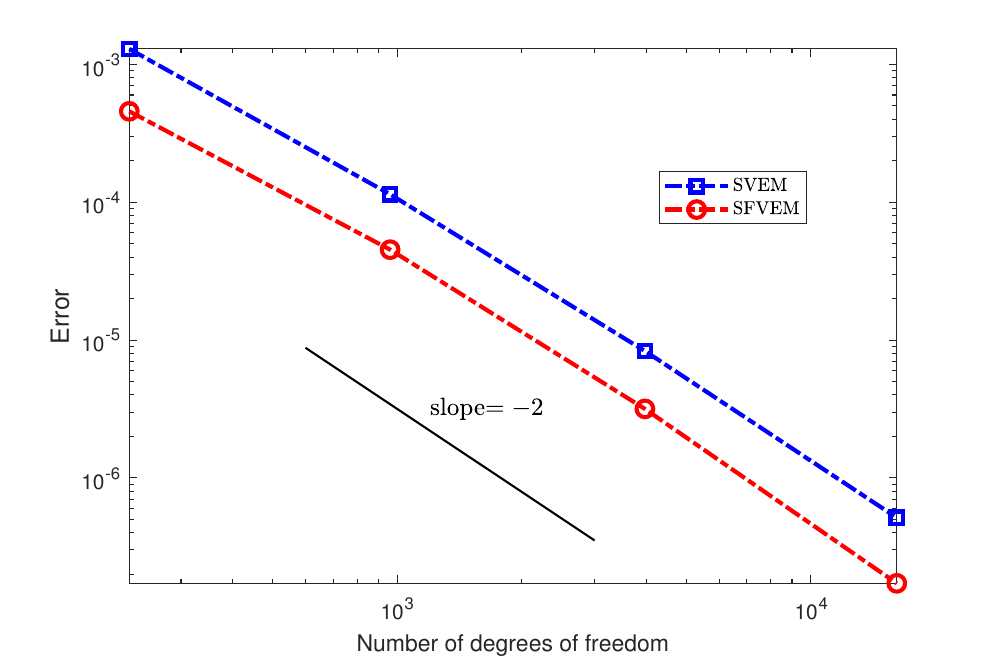}
    \end{minipage}
    \hfill{}
    \begin{minipage}[c]{0.32\linewidth}
        \centering
        \includegraphics[width=\linewidth]{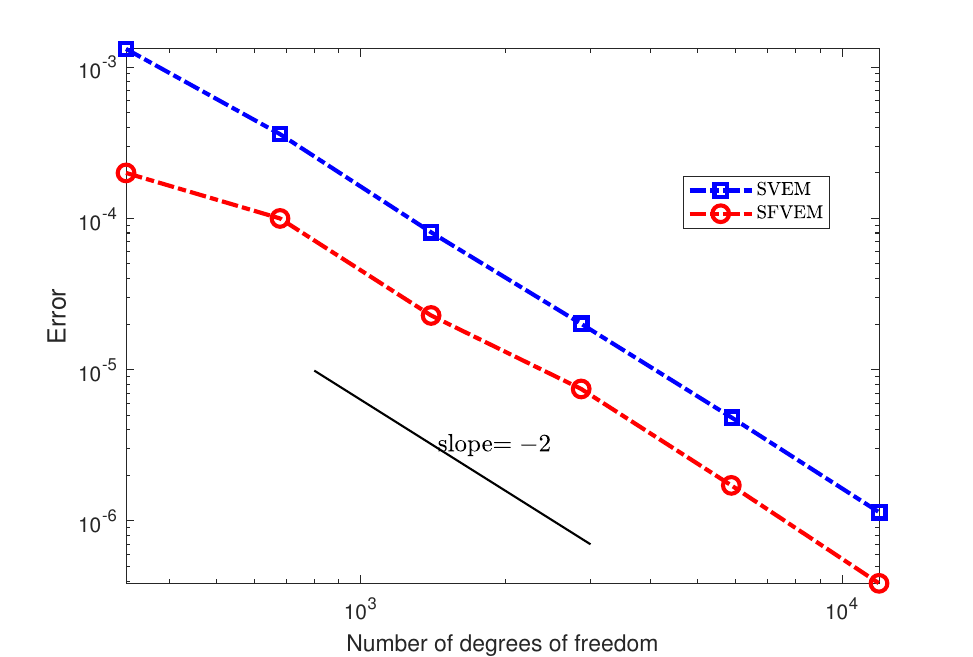}
    \end{minipage}%
    \hfill{}
    \begin{minipage}[c]{0.32\linewidth}
        \centering
        \includegraphics[width=\linewidth]{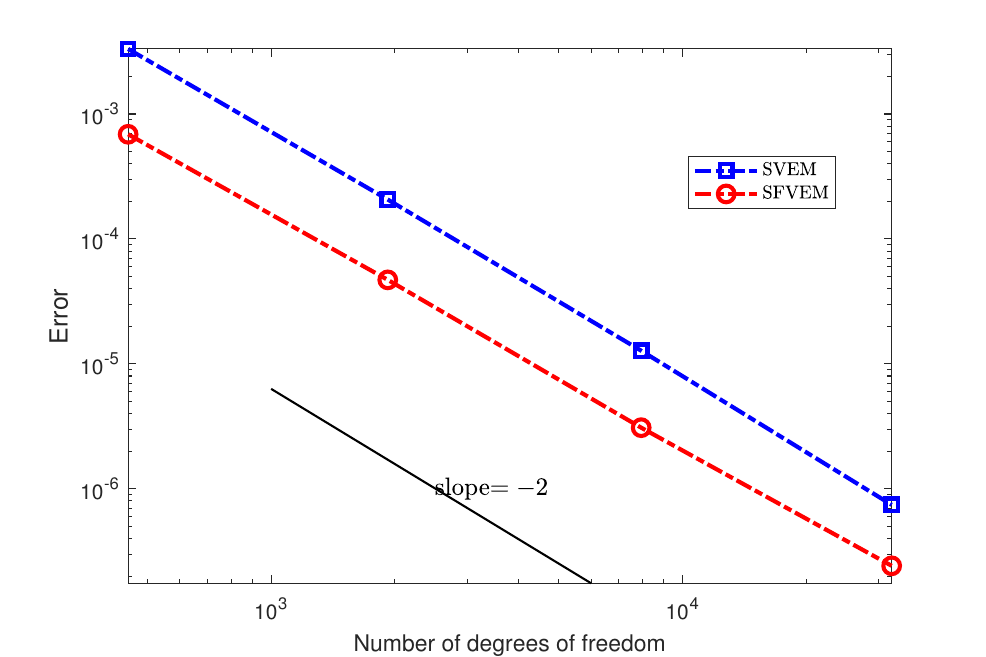}
    \end{minipage}
    \caption{Error curves for $k=2$: left ($\mathcal{T}_h^1$), middle ($\mathcal{T}_h^2$), right ($\mathcal{T}_h^3$).}
    \label{fig8}
\end{figure}
\indent From Figure \ref{fig8}, it is observed that both the SFVEM and the SVEM achieve the optimal convergence order on meshes $\mathcal{T}_h^1$, $\mathcal{T}_h^2$, and $\mathcal{T}_h^3$.
Furthermore, a clear comparison reveals that our SFVEM yields smaller errors than the SVEM under the same degree of freedom.

\section*{Author contributions}
Liangkun Xu: Conceptualization, Software, Methodology, Validation, Data curation, Writing—original draft preparation;
Shixi Wang: Conceptualization, Software, Methodology, Validation, Formal analysis, Writing—original draft preparation;
Hai Bi: Methodology, Validation, Formal analysis, Writing—review and editing, Supervision, Funding acquisition;
Yidu Yang: Methodology, Validation, Formal analysis, Writing—review and editing, Supervision.
All authors have read and agreed to the published version of the manuscript.

\section*{Acknowledgments}
This work was supported by the National Natural Science Foundation of China (Grant No. 12261024).

\section*{Conflict of interest}
The authors declare that they have no conflict of interest.


\begin{thebibliography}{999}
\bibitem{BeiraoBC2013}
L. Beir$\tilde{a}$o da Veiga, F. Brezzi, A. Cangiani, G. Manzini, L. D. Marini, A. Russo,  {Basic principles of virtual element methods},
\newblock  {\it Math. Models Methods Appl. Sci.,}
\textbf{23} (2013), 199-214. \url{http://dx.doi.org/10.1142/s0218202512500492}

\bibitem{BeiraoBM2014}
L. Beir$\tilde{a}$o da Veiga, F. Brezzi,  L. D. Marini, A. Russo,  {The hitchhiker's guide to the virtual element method},
\newblock  {\it Math. Models Methods Appl. Sci.,}
\textbf{24} (2014), 1541-1573. \url{http://dx.doi.org/10.1142/S021820251440003X}

\bibitem{BrezziFM2014}
F. Brezzi, R. S. Falk, L. D. Marini, {Basic principles of mixed virtual element methods},
\newblock  {\it ESAIM: Math. Model. Numer. Anal.},
 \textbf{48} (2014), 1227-1240. \url{http://dx.doi.org/10.1051/m2an/2013138}


\bibitem{DeDiosLM2016}
B. A. De Dios, K. Lipnikov, G. Manzini,  {The nonconforming virtual element method},
\newblock  {\it ESAIM: Math. Model. Numer. Anal.},
\textbf{50} (2016), 879-904. \url{http://dx.doi.org/10.1051/m2an/2015090}


\bibitem{BeiraoBreMar2016}
L. Beir$\tilde{a}$o da Veiga, F. Brezzi, L. D. Marini, A. Russo, {Serendipity nodal VEM spaces},
\newblock  {\it Comput. Fluids},
\textbf{141} (2016), 2-12. \url{http://dx.doi.org/10.1016/j.compfluid.2016.02.015}

\bibitem{MascottoPP2018}
L. Mascotto, I. Perugia, A. Pichler, {Non-conforming harmonic virtual element method: h-and p-versions},
\newblock  {\it J. Sci. Comput.}, \textbf{77} (2018), 1874-1908. \url{https://doi.org/10.1007/s10915-018-0797-4}

\bibitem{BenvenutiCM2019}
E. Benvenuti, A. Chiozzi, G. Manzini, N. Sukumar, {Extended virtual element method for the Laplace problem with singularities and discontinuities},
\newblock  {\it Comput. Methods Appl. Mech. Eng.}, \textbf{356} (2019), 571-597. \url{https://doi.org/10.1016/j.cma.2019.07.028}

\bibitem{CaoCG2022}
S. Cao, L. Chen, R. Guo, F. Lin,  {Immersed virtual element methods for elliptic interface problems in two dimensions},
 \newblock  {\it J. Sci. Comput.}, \textbf{93} (2022), 12. \url{https://doi.org/10.1007/s10915-022-01949-x}


\bibitem{ZhaoZ2023}
J. Zhao, S. Mao, B. Zhang, F. Wang, {The interior penalty virtual element method for the biharmonic problem},
 \newblock  {\it  Math. Comput.}, \textbf{92} (2023), 1543-1574. \url{https://doi.org/10.1090/mcom/3828}


\bibitem{ChenWZ2023}
F. Chen, Q. Wang, Z. Zhou, {Two-grid virtual element discretization of semilinear elliptic problem},
 \newblock  {\it Appl. Numer. Math.}, \textbf{186} (2023), 228-240. \url{https://doi.org/10.1016/j.apnum.2023.01.009}



\bibitem{BoffiGG2020}
D. Boffi, F. Gardini, L. Gastaldi, {Approximation of PDE eigenvalue problems involving parameter dependent matrices},
\newblock  {\it Calcolo}, \textbf{57} (2020), 41. \url{https://doi.org/10.1007/s10092-020-00390-6}

\bibitem{BerroneBM2022}
S. Berrone, A. Borio, F. Marcon, {Comparison of standard and stabilization free virtual elements on anisotropic elliptic problems},
\newblock  {\it Appl. Math. Lett.}, \textbf{129} (2022), 107971. \url{https://doi.org/10.1016/j.aml.2022.107971}

\bibitem{BeiraoCNRH2023}
L. Beir$\tilde{a}$o da Veiga, C. Canuto, R. H. Nochetto, M. Verani, {Adaptive VEM: stabilization-free a posteriori error analysis and contraction property},
\newblock  {\it SIAM J. Numer. Anal.}, \textbf{61} (2023), 457-494. \url{https://doi.org/10.1137/21M1458740}


\bibitem{Mascotto2023}
 L. Mascotto, {The role of stabilization in the virtual element method: a survey},
\newblock  {\it Comput. Math. Appl.},
\textbf{151} (2023), 244-251. \url{https://doi.org/10.1016/j.camwa.2023.09.045}

\bibitem{AlzabenBD2025}
L. Alzaben, D. Boffi, A. Dedner, L. Gastaldi, {On the stabilization of a virtual element method for an acoustic vibration problem},
\newblock  {\it Math. Models Methods Appl. Sci.},
 \textbf{35} (2025), 655-701. \url{https://doi.org/10.1142/s0218202525500071}

\bibitem{BerroneBOFA2026}
S. Berrone, A. Borio, D. Fassino, et al. {A residual a posteriori error estimate for the stabilization-free virtual element method}.
\newblock  {\it J. Comput. Phys.}, \textbf{553} (2026), 114704.  \url{https://doi.org/10.1016/j.jcp.2026.114704}

\bibitem{BerroneBOMA2023}
S. Berrone, A. Borio, F. Marcon, G. Teora, {A first-order stabilization-free virtual element method},
\newblock  {\it Appl. Math. Lett.},
\textbf{142} (2023), 108641. \url{https://doi.org/10.1016/j.aml.2023.108641}

\bibitem{BerroneBF2025}
S. Berrone, A. Borio, D. Fassino, F. Marcon,  {Stabilization-free Virtual Element Method for 2D second order elliptic equations},
\newblock  {\it Comput. Methods Appl. Mech. Eng.},
\textbf{438} (2025), 117839. \url{https://doi.org/10.1016/j.cma.2025.117839}

\bibitem{BerroneBM2025}
S. Berrone, A. Borio, F. Marcon, {Lowest order stabilization free virtual element method for the 2D Poisson equation},
\newblock  {\it Comput. Math. Appl.},
\textbf{177} (2025), 78-99. \url{https://doi.org/10.1016/j.camwa.2024.11.017}

\bibitem{DAltriAMdp2021}
A. M. D'Altri, S. de Miranda, L. Patruno, E. Sacco, {An enhanced VEM formulation for plane elasticity},
\newblock  {\it Comput. Methods Appl. Mech. Eng.},
 \textbf{376} (2021), 113663.  \url{https://doi.org/10.1016/j.cma.2020.113663}


\bibitem{ChenSukumar2023}
A. Chen, N. Sukumar, {Stabilization-free serendipity virtual element method for plane elasticity},
\newblock  {\it Comput. Methods Appl. Mech. Eng.},
 \textbf{404} (2023), 115784. \url{https://doi.org/10.1016/j.cma.2022.115784}

\bibitem{BertrandCG2023}
F. Bertrand, C. Carstensen, B. Gr\"{a}{\ss}le, N. T. Tran, {Stabilization-free HHO a posteriori error control},
\newblock  {\it Numer. Math.},
\textbf{154} (2023), 369-408.  \url{https://doi.org/10.1007/s00211-023-01366-8}

\bibitem{XuPeng2023}
B. B. Xu, F. Peng, P. Wriggers,  {Stabilization-free virtual element method for finite strain applications},
\newblock  {\it Comput. Methods Appl. Mech. Eng.},
 \textbf{417} (2023), 116555.  \url{https://doi.org/10.1016/j.cma.2023.116555}

\bibitem{LampertiCP2023}
A. Lamperti, M. Cremonesi, U. Perego,  A. Russo, C. Lovadina, {A Hu-Washizu variational approach to self-stabilized virtual elements: 2D linear elastostatics},
\newblock  {\it Comput. Mech.},
\textbf{71} (2023), 935-955. \url{https://doi.org/10.1007/s00466-023-02282-2}

\bibitem{BouchezGB2024}
T. Bouchez, A. Gravouil, N. Blal, A. Giacoma, E. Delor, J. D. Beley,  {A Hu-Washizu stabilization-free Virtual Element Method for 3D linear elasticity with star-convex polyhedrons},
\newblock  {\it Comput. Methods Appl. Mech. Eng.},
\textbf{432} (2024), 117420. \url{https://doi.org/10.1016/j.cma.2024.117420}

\bibitem{BorioLM2024}
A. Borio, C. Lovadina, F. Marcon, M. Visinoni, {A lowest order stabilization-free mixed virtual element method},
\newblock  {\it Comput. Math. Appl.},
\textbf{160} (2024), 161-170.  \url{https://doi.org/10.1016/j.camwa.2024.02.024}

\bibitem{BerroneBM2024}
S. Berrone, A. Borio, F. Marcon, {A stabilization-free virtual element method based on divergence-free projections},
\newblock  {\it Comput. Methods Appl. Mech. Eng.},
\textbf{424} (2024), 116885. \url{https://doi.org/10.1016/j.cma.2024.116885}

\bibitem{MengWB2022}
J. Meng, X. Wang, L. L. Bu, L. Mei, {A lowest-order free-stabilization virtual element method for the Laplacian eigenvalue problem},
\newblock  {\it J. Comput. Appl. Math.},
\textbf{410} (2022), 114013. \url{https://doi.org/10.1016/j.cam.2021.114013}

\bibitem{MarconMora2025}
F. Marcon, D. Mora,  {A Stabilization-Free Virtual Element Method for the Convection-Diffusion Eigenproblem},
\newblock {\it J. Sci. Comput.},
\textbf{102} (2025), 46. \url{https://doi.org/10.1007/s10915-024-02765-1}

\bibitem{MengGQM2026}
J. Meng, L. Guan, X. Qian, S. Song, L. Mei,  {Stabilization-Free Virtual Element Method for the Transmission Eigenvalue Problem on Anisotropic Media},
\newblock  {\it J. Comput. Math.},
\textbf{44} (2026), 103-134. \url{https://doi.org/10.4208/jcm.2410-m2024-0023}

\bibitem{FolignoBCV2026}
P. P. Foligno, D. Boffi, F. Credali, R. Vescovini, {Benchmarking stabilized and self-stabilized p-virtual element methods with variable coefficients}
\newblock  {\it Comput. Methods Appl. Mech. Eng.},
 \textbf{455} (2026), 118863. \url{https://doi.org/10.1016/j.cma.2026.118863}

\bibitem{CangianiMS2017}
A. Cangiani, G. Manzini, O. J. Sutton, {Conforming and nonconforming virtual element methods for elliptic problems},
\newblock  {\it IMA J. Numer. Anal.},
 \textbf{37} (2017), 1317-1354. \url{https://doi.org/10.1093/imanum/drw036}

\bibitem{BeiraoBM2016}
L. Beir$\tilde{a}$o da Veiga, F. Brezzi, L. D. Marini, A. Russo, {Virtual element method for general second-order elliptic problems on polygonal meshes},
\newblock  {\it Math. Models Methods Appl. Sci.},
\textbf{26} (2016), 729-750. \url{https://doi.org/10.1142/S0218202516500160}

\bibitem{BernardiV2000}
C. Bernardi, R. Verf\"{u}rth, {Adaptive finite element methods for elliptic equations with non-smooth coefficients},
\newblock  {\it Numer. Math.},
\textbf{85} (2000), 579-608. \url{https://doi.org/10.1007/PL00005393}

\bibitem{Grisvard2011}
\newblock P. Grisvard,  {\em Elliptic problems in nonsmooth domains},
\newblock  Society for Industrial and Applied Mathematics, 2011. \url{https://epubs.siam.org/doi/book/10.1137/1.9781611972030}

\bibitem{AhmadAB2013}
B. Ahmad, A. Alsaedi, F. Brezzi,  L. D. Marini, A. Russo, {Equivalent projectors for virtual element methods},
\newblock  {\it Comput. Math. Appl.},
\textbf{66} (2013), 376-391. \url{https://doi.org/10.1016/j.camwa.2013.05.015}

\bibitem{BaO1991}
I. Babu$\check{s}$ka  J. E. Osborn,
\newblock  {\em Eigenvalue Problems, Handbook of Numerical Analysis, Vol.II, Finite Element Methods (Part 1)},
\newblock  Edited by P. G. Ciarlet and J. L. Lions, North-Holland, Elsevier Science Publishers B.V, 1991.
\url{https://doi.org/10.1016/S1570-8659(05)80042-0}

\bibitem{MoraRiveraRodriguez2015}
D. Mora, G. Rivera, R. Rodr\'{\i}guez, {A virtual element method for the Steklov eigenvalue problem},
\newblock  {\it Math. Models Methods Appl. Sci.},
\textbf{25} (2015), 1421-1445. \url{https://doi.org/10.1142/S0218202515500372}

\bibitem{YuY2022}
Y. Yu, {mVEM: a MATLAB software package for the virtual element methods},
\newblock arXiv preprint, 2022, 2204.01339. \url{https://doi.org/10.48550/arXiv.2204.01339}

\bibitem{Beiraobmr2016}
L. Beir$\tilde{a}$o da Veiga, F. Brezzi, L.D. Marini, A. Russo, {Virtual Element Implementation for General Elliptic Equations. In: G. Barrenechea, F. Brezzi, A. Cangiani, E. Georgoulis, (eds) Building Bridges: Connections and Challenges in Modern Approaches to Numerical Partial Differential Equations. Lecture Notes in Computational Science and Engineering},
 \textbf{114} (2016), Springer, Cham.
\url{https://doi.org/10.1007/978-3-319-41640-3_2}


\end{thebibliography}
\end{document}